\newtheorem{theorem}{Theorem}
\newtheorem{theoremx}{Theorem}
\newtheorem{lemma}{Lemma}
\theoremstyle{definition}
\newtheorem{definition}{Definition}
\newtheorem*{remark}{Remark}
\title[Barab\'asi--Albert random graph with multiple type edges and perturbation]{Barab\'asi--Albert random graph with multiple type edges with perturbation}
\author{\'Agnes Backhausz}
\address{
	ELTE E\"otv\"os Lor\'and University, Budapest, Hungary\\
	Faculty of Science\\
	Department of Probability Theory and Statistics\\
	and\\
	Alfr\'ed R\'enyi Institute of Mathematics, Budapest, Hungary}
\email{agnes.backhausz@ttk.elte.hu}
\author{Bence Rozner}
\address{
	ELTE E\"otv\"os Lor\'and University, Budapest, Hungary\\
	Faculty of Science\\
	Department of Probability Theory and Statistics}
\email{bence.rozner@ttk.elte.hu}
\keywords{Random graphs, preferential attachment, perturbation, asymptotic degree distribution}
\subjclass[2010]{Primary: 05C80}
\date{\today}
\begin{document}
\maketitle
\thispagestyle{empty}

\begin{abstract}
	In this paper we introduce the perturbed version of the Barab\'asi--Albert random graph with multiple type edges and prove the existence of the (generalized) asymptotic degree distribution. Similarly to the non-perturbed case, the asymptotic degree distribution depends on the almost sure limit of the proportion of edges of different types. However, if there is perturbation, then the resulting degree distribution will be deterministic, which is a major difference compared to the non-perturbed case.
\end{abstract}

\section{Introduction}
Many preferential attachment random graph models have been studied for a long time, see e.g.\ \cite{Barabasi_Albert,Durett,Frieze_Karonski,van_der_Hofstad}. This is mainly motivated by the analysis of real-world networks, such as the internet and different kind of biological or social networks. In many applications, the preferential attachment graphs can be extended by various features. For example we can assign types to the vertices or to the edges of the graph, which results in more adequate models. In particular, in a social network we can distinguish men and women, or, like in the models of population dynamics, individuals can be divided into different groups according to a genetic, physical property, or certain behaviour. There are various random graph models with multi-type vertices, see e.g.\ \cite{Abdullah,Antunovic,Rosengren}, which have been investigated. In all of these models, the types of vertices are chosen with a dynamics strongly related to the evolution of the graph. That is, there is an interaction between the choice of new edges and the type of new vertices, and the structure of the graph has an impact on the proportion of vertices of different types. For example, to answer the question of coexistence (which is a common question in population dynamics models), that is, to decide whether the proportion of all types of vertices tends to a positive number or not, one has to understand quantities like the number of vertices of a given type with a given number of edges. This kind of analysis is performed in the papers mentioned above.

Now we consider models where it is not the vertices, but the edges that have different types. This can be used to model different kind of relationships between the individuals -- in a social, biological or financial network, connections are usually not of the same nature, and this can be important from the point of view of contagion or epidemic spread. In our model, the type of an edge is an element of a fixed finite set, and it does not change with time. It is chosen randomly when the edge is born, with a distribution that depends on the current state of the graph and on the types of the edges going out from the neighbours of the new vertex. A general family of preferential attachment random graphs with multi-type edges has been examined in our previous work \cite{Backhausz_Rozner}. Growing networks with two different types of edges can be considered as directed graphs. In this case the type of an edge is its orientation. That is, when a new vertex is born, then it is attached to the graph with an edge directed from the new vertex to the already existing ones or directed from the existing vertices to the new one, and this corresponds to the two different types. Directed preferential attachment models were introduced and examined in \cite{Bollobas_Borgs,Wang}, but with different dynamics than in \cite{Backhausz_Rozner}.

In this paper we are interested in a version of robustness in preferential attachment graph models with multi-type edges. The goal is to compare (i) a model in which the probability of choosing a type is exactly the proportion of the current type among the edges going out from the endpoint of the new edge; and (ii) its modified version, when, after this step, types can change with certain probability. In particular, we introduce perturbation in the multi-type Barab\'asi--Albert random graph, and prove that this shows different phenomena than the original version. That is, errors in the dynamics of multi-type random graphs can lead to essential changes in the asymptotic behaviour of the model. The multi-type Barab\'asi--Albert random graph model has been described in \cite{Backhausz_Rozner}, which is a generalization of the Barab\'asi--Albert random graph model introduced in \cite{Barabasi_Albert}, specified in \cite{Bollobas}.

We prove the existence of the asymptotic degree distribution in the perturbed Barab\'asi--Albert random graph, and we also provide recurrence equations for the asymptotic degree distribution. The main difference between the perturbed and the non-perturbed Barab\'asi--Albert random graph is the deterministic or stochastic nature of the asymptotic degree distribution. The reason for that is the asymptotic behaviour of the proportion of edges of different types which can be described with an urn model. If there is no perturbation, then the proportion of edges of a given type converges to a non-degenerate random variable. On the other hand, if there is perturbation, then it converges to a deterministic constant almost surely. This is based on the properties of the underlying urn models which is explained in more details in \cite{Laruelle_Pages}. In the current paper, we generalize the results of \cite{Laruelle_Pages} about the almost sure limit of the proportion of edges of different types (or colours) for the case when we also allow multiple drawings with replacement.

Ostroumova, Ryabchenko and Samosvat \cite{Ostroumova} propose a general class of preferential attachment models with single-type edges. They also introduce perturbation in the dynamics, which is different from the one that we have in our model. They assume that the error terms converge to zero with rate $O(1/n)$, where $n$ is the size of the graph. In the perturbed (multi-type) Barab\'asi--Albert random graph, we assume that the probability of errors converges to a positive number.

\textbf{Outline.\ }In Section 2, we introduce the perturbed Barab\'asi--Albert random graph and then formulate the main result of this paper. In Section 3, we generalize a result on an urn model by Laruelle and Pag\`es \cite{Laruelle_Pages}. By using the asymptotic properties of this general urn model, we are able to prove the main theorem. The proof is elaborated in Section 4, by showing that the conditions of a general theorem from \cite{Backhausz_Rozner} hold.

\section{The model and main results}
\subsection{Notation} Let $(G_{n})_{n=0}^{\infty}$ be a sequence of finite random multi-graphs without loop edges. For all $n\geq{}0$, the set of vertices and the set of edges of $G_{n}$ are denoted by $V_{n}$ and $E_{n}$, respectively. Throughout the paper, $N$ will denote the number of possible types of edges. For every $l\in[N]=\{1,2,\dots,N\}$ let $E_{n}^{(l)}$ denote the set of edges of type $l$ in $G_{n}$. We assume that the different types form a partition of the edges. For every $l$ we have $E_{n}^{(l)}\subseteq{}E_{n+1}^{(l)}$. We assume that the initial configuration $G_{0}$ is a finite deterministic graph, moreover for every $l\in[N]$ we have $|E_{0}^{(l)}|>0$. We group the vertices based on the number of edges of different types connected to them.
\begin{definition}
	For a given $n$, the generalized degree of a vertex $v\in{}V_{n}$ in the $n^{\textrm{th}}$ step is $\textbf{deg}_{n}(v)=\left(\textrm{deg}_{n}^{(l)}(v),l\in[N]\right)$, where $\textrm{deg}_{n}^{(l)}(v)$ is the number of edges of type $l$ connected to $v$ in $G_{n}$. For every $\boldsymbol{d}=(d_{1},\dots,d_{N})\in\mathbb{N}^{N}$ we define $X_{n}(\boldsymbol{d})=|\{v\in{}V_{n}:\textbf{deg}_{n}(v)=\boldsymbol{d}\}|$, i.e.\ the number of vertices in $G_{n}$ with generalized degree $\boldsymbol{d}$, that is, the number of vertices with $d_{l}$ edges of type $l$.
\end{definition}
 Finally, for every $n$, the $\sigma$-algebra generated by the first $n$ multi-type graphs is denoted by $\mathcal{F}_{n}$. We can choose $\mathcal{F}_{0}$ to be the trivial $\sigma$-algebra, since $G_{0}$ is deterministic.

Throughout the paper, for every vector $\boldsymbol{v}$ of finite dimension we define the weight of $\boldsymbol{v}$ as the sum of all the elements of $\boldsymbol{v}$. This will be denoted by $s(\boldsymbol{v})$.

In the sequel, $\mathbb{N}$ will denote the set of non-negative integers, furthermore $\boldsymbol{e}_{l}$ will denote the $l^{\textrm{th}}$ unit vector in $\mathbb{R}^{N}$ and $\boldsymbol{1}$ will be the vector with entries all equal to $1$.

\subsection{Assumptions.}In order to introduce perturbation, we will use the matrices of error probabilities denoted by $\boldsymbol{F}_{n}$. For every $n\geq{}1$ let $\boldsymbol{F}_{n}=(\varepsilon_{k,l}^{(n)}:k,l\in[N])\in[0,1]^{N\times{}N}$ be a matrix, such that for every fixed $k\in[N]$ we have $\sum_{l=1}^{N}\varepsilon_{k,l}^{(n)}=1$. That is, $\varepsilon_{k,l}^{(n)}$ is the probability that a type $k$ edge becomes type $l$ in the $n^{\textrm{th}}$ step. We assume that there is a matrix denoted by $\boldsymbol{F}=\left(\varepsilon_{k,l}:k,l\in[N]\right)\in[0,1]^{N\times{}N}$  such that for every fixed $k\in[N]$ we have $\sum_{l=1}^{N}\varepsilon_{k,l}=1$ and for every $k,l\in[N]$ we have $\varepsilon_{k,l}^{(n)}\to\varepsilon_{k,l}$ as $n\to\infty$. That is, for every $k,l$, the probability that a type $k$ edge becomes type $l$, converges to $\varepsilon_{k,l}$.

The dynamics of the perturbed Barab\'asi--Albert random graph is the following. Let us fix a positive integer denoted by $M$. In the $n^{\textrm{th}}$ step
\begin{enumerate}
	\item{}a new vertex $v_{n}$ is born.
	\item{}The vertex $v_{n}$ attaches to some of the already existing vertices with $M$ (not necessarily different) edges with probabilities proportional to the actual degrees of the existing vertices. The endpoints of the $M$ new edges are chosen independently. We do not update the degrees of the vertices until the end of the $n^{\textrm{th}}$ step.
	\item{}Every new edge gets a type randomly. The types of the new edges are chosen independently, and the probability of each type is its proportion among the types of the edges of the already existing endpoint of the new edge (not counting the edges added in the actual step).
	\item{}The types of the new edges change independently of each other with probabilities given by $\boldsymbol{F}_{n}$, i.e.\ if there is a new edge of type $k$, then its type after perturbation is $l$ with probability $\varepsilon_{k,l}^{(n)}$.
\end{enumerate}

\subsection{The main result.} We are now ready to state our main theorem on the asymptotic degree distribution of the perturbed Barab\'asi--Albert random graph.
\begin{theorem}
	\label{thm_asymptotic_degree_dist}
	In the perturbed Barab\'asi--Albert random graph, if we assume that $\boldsymbol{F}$ is irreducible, then for every $\boldsymbol{d}=(d_{1},\dots,d_{N})\in\mathbb{N}^{N}$
	\begin{align*}
		\lim_{n\to\infty}\frac{X_{n}(\boldsymbol{d})}{|V_{n}|}&=x(\boldsymbol{d})\textrm{ a.s.}
	\end{align*}
	holds for a deterministic $x(\boldsymbol{d})\in[0,1]$. Furthermore, for every $\boldsymbol{d}\in\mathbb{N}^{N}$ we have the following recurrence equation:
	\begin{align*}
		\textrm{if }s(\boldsymbol{d})=M\textrm{, then}\quad{}x(\boldsymbol{d})&=\frac{2\cdot{}M!}{M+2}\left[\prod_{l=1}^{N}\frac{1}{d_{l}!}\left(\sum_{k=1}^{N}\psi^{(k)}\cdot\varepsilon_{k,l}\right)^{d_{l}}\right]\\
		\textrm{if }s(\boldsymbol{d})>M\textrm{, then}\quad{}x(\boldsymbol{d})&=\sum_{l=1}^{N}\frac{(\boldsymbol{d}-\boldsymbol{e}_{l})^{T}\boldsymbol{F}_{\bullet,l}}{s(\boldsymbol{d})+2}x(\boldsymbol{d}-\boldsymbol{e}_{l}),
	\end{align*}
	where
	\begin{itemize}
		\item{}$s(\boldsymbol{d})=\boldsymbol{d}^{T}\boldsymbol{1}=\sum_{l=1}^{N}d_{l}$,
		\item{}$\psi^{(k)}$ is the almost sure limit of the proportion of edges of type $k$, which is a deterministic constant and
		\item{}$\boldsymbol{F}_{\bullet,l}$ denotes the $l^{\textrm{th}}$ column of the matrix $\boldsymbol{F}$.
	\end{itemize}
	
	If we also assume that $\boldsymbol{F}$ is symmetric, then for every $\boldsymbol{d}=(d_{1},\dots,d_{N})\in\mathbb{N}^{N}$ we have the following recurrence equation:
	\begin{align*}
		\textrm{if }s(\boldsymbol{d})=M\textrm{,  then}\quad{}x(\boldsymbol{d})&=\frac{2\cdot{}M!}{M+2}\cdot\frac{1}{\prod_{l=1}^{N}d_{l}!}\left(\frac{1}{N}\right)^{M}\\
		\textrm{if }s(\boldsymbol{d})>M\textrm{, then}\quad{}x(\boldsymbol{d})&=\sum_{l=1}^{N}\frac{(\boldsymbol{d}-\boldsymbol{e}_{l})^{T}\boldsymbol{F}_{\bullet,l}}{s(\boldsymbol{d})+2}x(\boldsymbol{d}-\boldsymbol{e}_{l}).
	\end{align*}
\end{theorem}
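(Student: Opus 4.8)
The plan is to prove the theorem in three steps: (1) identify the almost sure limit $\psi^{(k)}$ of the proportion of type-$k$ edges by an urn argument, (2) recognise the perturbed Barab\'asi--Albert graph as an instance of the general preferential-attachment model of \cite{Backhausz_Rozner} and verify the hypotheses of the convergence theorem proved there, and (3) rearrange the resulting fixed-point relation into the stated recurrences. For step (1), write $p_n^{(k)}=|E_n^{(k)}|/|E_n|$ and note that, conditionally on $\mathcal{F}_n$, the $M$ new edges of step $n$ are independent and identically distributed: a single new edge lands on $w\in V_n$ with probability $\deg_n(w)/(2|E_n|)$, gets pre-perturbation type $k$ with probability $\deg_n^{(k)}(w)/\deg_n(w)$ — so, marginally over the endpoint, with probability $\sum_{w}\deg_n^{(k)}(w)/(2|E_n|)=p_n^{(k)}$ — and is then re-typed to $l$ with probability $\varepsilon_{k,l}^{(n)}$. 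Hence $(|E_n^{(1)}|,\dots,|E_n^{(N)}|)$ is exactly a P\'olya-type urn with colour set $[N]$, $M$ drawings with replacement per step, and addition matrices $\boldsymbol{F}_n\to\boldsymbol{F}$, which is the situation analysed in Section~3. Since $\boldsymbol{F}$ is an irreducible stochastic matrix, the generalization of the Laruelle--Pag\`es result from Section~3 yields $p_n^{(k)}\to\psi^{(k)}$ almost surely, where $\boldsymbol{\psi}=(\psi^{(1)},\dots,\psi^{(N)})$ is the unique stationary distribution of $\boldsymbol{F}$; in particular $\boldsymbol{\psi}^{T}\boldsymbol{F}=\boldsymbol{\psi}^{T}$ and $\boldsymbol{\psi}$ is deterministic, and when $\boldsymbol{F}$ is symmetric it is doubly stochastic, so $\psi^{(k)}=1/N$ for every $k$.

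For step (2) I would compute the conditional expected increment $\mathbb{E}\!\left[X_{n+1}(\boldsymbol{d})-X_n(\boldsymbol{d})\mid\mathcal{F}_n\right]$ and show it has the affine-in-$X_n$ form required by the general theorem of \cite{Backhausz_Rozner}. There are three contributions. First, the newborn vertex $v_n$: since its $M$ edges receive, independently, final type $l$ with probability $\sum_k p_n^{(k)}\varepsilon_{k,l}^{(n)}$, its generalized degree is exactly multinomial, so it lands in state $\boldsymbol{d}$ (possible only when $s(\boldsymbol{d})=M$) with conditional probability $\frac{M!}{\prod_l d_l!}\prod_{l=1}^{N}\big(\sum_k p_n^{(k)}\varepsilon_{k,l}^{(n)}\big)^{d_l}$, which tends to $\frac{M!}{\prod_l d_l!}\prod_{l=1}^{N}\big(\psi^{(l)}\big)^{d_l}$. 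Second, an old vertex with generalized degree $\boldsymbol{d}-\boldsymbol{e}_l$ moves into state $\boldsymbol{d}$ precisely by receiving exactly one new edge of final type $l$, contributing in total $\sum_{l=1}^{N}\frac{(\boldsymbol{d}-\boldsymbol{e}_l)^{T}\boldsymbol{F}_{\bullet,l}}{2|V_n|}X_n(\boldsymbol{d}-\boldsymbol{e}_l)+o(1)$. Third, an old vertex in state $\boldsymbol{d}$ leaves it by receiving at least one new edge, contributing $-\frac{s(\boldsymbol{d})}{2|V_n|}X_n(\boldsymbol{d})+o(1)$. The $o(1)$ terms absorb the events that some vertex receives two or more of the $M$ new edges in the same step (conditional probability $O(1/n)$ after summing over all vertices, since each $X_n(\cdot)\le|V_n|$), the replacement of $2|E_n|/M=2|V_n|+O(1)$ by $2|V_n|$, and the replacement of $\boldsymbol{F}_n,\varepsilon_{k,l}^{(n)},p_n^{(k)}$ by $\boldsymbol{F},\varepsilon_{k,l},\psi^{(k)}$ — all justified by step (1) together with the bound $0\le X_n(\boldsymbol{d})/|V_n|\le 1$. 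Combined with the deterministic increment bound $|X_{n+1}(\boldsymbol{d})-X_n(\boldsymbol{d})|\le M+1$, this is exactly what the general theorem requires, and it gives $X_n(\boldsymbol{d})/|V_n|\to x(\boldsymbol{d})$ almost surely, with $x(\boldsymbol{d})$ solving
\begin{align*}
\Big(1+\tfrac{s(\boldsymbol{d})}{2}\Big)x(\boldsymbol{d})=\mathbf{1}\{s(\boldsymbol{d})=M\}\,\frac{M!}{\prod_{l}d_l!}\prod_{l=1}^{N}\big(\psi^{(l)}\big)^{d_l}+\sum_{l=1}^{N}\frac{(\boldsymbol{d}-\boldsymbol{e}_l)^{T}\boldsymbol{F}_{\bullet,l}}{2}\,x(\boldsymbol{d}-\boldsymbol{e}_l),
\end{align*}
with the convention $x(\boldsymbol{d}')=0$ whenever $\boldsymbol{d}'\notin\mathbb{N}^{N}$ or $s(\boldsymbol{d}')<M$ (no vertex born after step $0$ ever has total degree below $M$, and the $|V_0|$ vertices of $G_0$ are asymptotically negligible). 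Because $\boldsymbol{\psi}$ is deterministic, so is every $x(\boldsymbol{d})$ — this is the point where the behaviour departs from the non-perturbed case.

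For step (3), one solves the above relation by induction on $s(\boldsymbol{d})$. When $s(\boldsymbol{d})=M$ the sum on the right-hand side vanishes, so $x(\boldsymbol{d})=\frac{2}{M+2}\cdot\frac{M!}{\prod_l d_l!}\prod_{l=1}^{N}\big(\psi^{(l)}\big)^{d_l}$; substituting the stationarity identity $\psi^{(l)}=\sum_k\psi^{(k)}\varepsilon_{k,l}$ yields the stated base case, and in the symmetric case $\psi^{(l)}=1/N$ gives the second form. When $s(\boldsymbol{d})>M$ the indicator is $0$ and dividing by $\tfrac{s(\boldsymbol{d})+2}{2}$ gives $x(\boldsymbol{d})=\sum_{l=1}^{N}\frac{(\boldsymbol{d}-\boldsymbol{e}_l)^{T}\boldsymbol{F}_{\bullet,l}}{s(\boldsymbol{d})+2}x(\boldsymbol{d}-\boldsymbol{e}_l)$, as claimed. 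I expect the main obstacle to be step (2): carefully controlling the error terms in the increment computation and, above all, fitting the perturbed graph precisely into the hypotheses of the general theorem of \cite{Backhausz_Rozner} — in particular checking that the (generically random) limiting ``environment'' in that theorem is here the deterministic vector $\boldsymbol{\psi}$ produced in step (1), which is exactly what makes the asymptotic degree distribution deterministic.
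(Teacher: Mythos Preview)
Your proposal is correct and follows essentially the same route as the paper: first the urn argument of Section~3 (Lemma~\ref{lemma_asymptotic_edge_composition}) to get the deterministic limit $\boldsymbol{\psi}$, then verification of the hypotheses \textbf{(GM1)}--\textbf{(GM5)} of Theorem~\ref{thm_Backhausz_Rozner} (your increment computation is exactly this, with your ``newborn'', ``enter'', and ``leave'' contributions corresponding to \textbf{(GM5)}, \textbf{(GM4)}, and \textbf{(GM3)} respectively, and your bound $|X_{n+1}(\boldsymbol{d})-X_n(\boldsymbol{d})|\le M+1$ giving \textbf{(GM2)}), and finally reading off the recurrence. The only cosmetic difference is that you invoke the stationarity $\psi^{(l)}=\sum_k\psi^{(k)}\varepsilon_{k,l}$ already in step~(2) to simplify $q(\boldsymbol{d})$ and then undo it in step~(3), whereas the paper keeps the form $\sum_k\psi^{(k)}\varepsilon_{k,l}$ throughout and never needs the identity.
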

\begin{remark}
	Notice that $x(\boldsymbol{d})=0$ if $s(\boldsymbol{d})<M$ or if we have $d_{l}<0$ for any $l\in[N]$.
\end{remark}

For comparison, let us assume that there is no perturbation, i.e.\ $\boldsymbol{F}_{n}$ is the identity matrix for every $n$. It also means that the condition on the irreducibility of $\boldsymbol{F}$ fails in Theorem \ref{thm_asymptotic_degree_dist}. However, Theorem 2 in \cite{Backhausz_Rozner} describes the asymptotic degree distribution in the non-perturbed version of the model. This is the following: in the multi-type Barab\'asi--Albert random graph for every $\boldsymbol{d}\in\mathbb{N}^{N}$ we have
\begin{align*}
	\lim_{n\to\infty}\frac{X_{n}(\boldsymbol{d})}{|V_{n}|}&=x(\boldsymbol{d})\textrm{ a.s., where now $x(\boldsymbol{d})$ is a non-deterministic random variable.}
\end{align*}
The random variables $x(\boldsymbol{d})$ satisfy the following recurrence equation for every $\boldsymbol{d}\in\mathbb{N}^{N}$:
\begin{align*}
	x(\boldsymbol{d})&=\sum_{l=1}^{N}\frac{d_{l}-1}{s(\boldsymbol{d})+2}x(\boldsymbol{d}-\boldsymbol{e}_{l})+\frac{2}{s(\boldsymbol{d})+2}\mathbb{P}(M=s(\boldsymbol{d}))\frac{s(\boldsymbol{d})!}{\prod_{l=1}^{N}d_{l}!}\prod_{l=1}^{N}\left(\psi^{(l)}\right)^{d_{l}},
\end{align*}
where $\psi^{(l)}$ is the almost sure limit of the proportion of the edges of type $l$. In this case the asymptotic degree distribution is random, which means that it also depends on the asymptotic proportion of edges of different types. If $M=1$, that is, the graph is a tree, then $(\psi^{(l)},l\in[N])$ has Dirichlet distribution with parameters $(E_{0}^{(l)},l\in[N])$. However, in the perturbed Barab\'asi--Albert random graph the asymptotic degree distribution is deterministic.

\section{A general urn model}
In order to prove our main theorem, we need to understand the asymptotic behaviour of the composition of number of edges of different types. We introduce a general urn model to describe the proportion of the edges of the multi-type perturbed Barab\'asi--Albert random graph. This model is a generalization of a special case of the urn model introduced by Laruelle and Pag\`es in \cite{Laruelle_Pages}. We remark that the generalization of the results of \cite{Gangopadhyay_Maulik} could also be used for our purposes.

We assume that there are $N$ colours denoted by $\left\{1,2,\dots,N\right\}$. The composition vector of the urn in the $n^{\textrm{th}}$ step is denoted by $\boldsymbol{C}_{n}\in\mathbb{N}^{N}$, i.e.\ $C_{n,i}$ is the number of balls of colour $i$. The total number of balls in the urn in the $n^{\textrm{th}}$ step is denoted by $s(\boldsymbol{C}_{n})=\sum_{i=1}^{N}C_{n,i}$. We assume that in every step we draw $M$ balls, with replacement, independently of each other and at the end of the step we add some additional balls to the urn. In the $n^{\textrm{th}}$ step for trial $i$ (where $i\in[M]$), let $\boldsymbol{\chi}_{n}^{(i)}$ be the $N$ dimensional indicator vector of the colour drawn, and let $\boldsymbol{R}_{n}^{(i)}$ be the $N\times{}N$ dimensional replacement matrix with possibly random but non-negative entries. This means that $\left(\boldsymbol{R}_{n}^{(i)}\right)_{k,l}$ is the number of balls of colour $l$ added to the urn if a ball of colour $k$ was chosen in the $n^{\textrm{th}}$ step for trial $i$.

For every $n$, we have
\begin{align}
\label{eq_urn_dynamics}
	\boldsymbol{C}_{n+1}&=\boldsymbol{C}_{n}+\sum_{i=1}^{M}\boldsymbol{R}_{n+1}^{(i)}\boldsymbol{\chi}_{n+1}^{(i)}.
\end{align}

For every $n$, we denote by $\mathcal{G}_{n}$ the $\sigma$-algebra of the draws and replacements in the first $n$ steps, that is, the $\sigma$-algebra generated by
\begin{align*}
	\boldsymbol{C}_{0},\left(\boldsymbol{\chi}_{j}^{(i)},i=1,2,\dots,M\right)_{j=1}^{n}\textrm{ and }\left(\boldsymbol{R}_{j}^{(i)},i=1,2,\dots,M\right)_{j=1}^{n}.
\end{align*}
 
We have the following assumptions on the urn model:
\begin{description}
	\item[(U1)]the initial configuration $\boldsymbol{C}_{0}$ is non-negative and at least one of the coordinates is positive;
	\item[(U2)]for every $n$, $i$ and $j$ we have $\mathbb{P}\left(\boldsymbol{\chi}_{n}^{(i)}=\boldsymbol{e}_{j}\Big|\mathcal{G}_{n-1}\right)=\frac{C_{n-1,j}}{s(\boldsymbol{C}_{n-1})}$, where $\boldsymbol{e}_{j}$ is the $j^{\textrm{th}}$ unit vector in $\mathbb{R}^{N}$; that is, the probability of choosing a colour is its proportion in the urn;
	\item[(U3)]for every $n$, the random variables $\boldsymbol{\chi}_{n}^{(1)},\boldsymbol{\chi}_{n}^{(2)},\dots,\boldsymbol{\chi}_{n}^{(M)}$ are identically distributed given $\mathcal{G}_{n-1}$, and similarly the random matrices $\boldsymbol{R}_{n}^{(1)},\boldsymbol{R}_{n}^{(2)},\dots,\boldsymbol{R}_{n}^{(M)}$ are identically distributed given $\mathcal{G}_{n-1}$, furthermore we assume that
	\begin{align*}
		\boldsymbol{\chi}_{n}^{(1)},\boldsymbol{\chi}_{n}^{(2)},\dots,\boldsymbol{\chi}_{n}^{(M)},\boldsymbol{R}_{n}^{(1)},\boldsymbol{R}_{n}^{(2)},\dots,\boldsymbol{R}_{n}^{(M)}
	\end{align*}
	are conditionally independent given $\mathcal{G}_{n-1}$. This implies that, even if the replacement matrix is random, it is independent of the actual draw.
\end{description}

For every $n$, we define the generating matrices as the conditional expectation of the replacement matrix, i.e.\ $\boldsymbol{H}_{n}=\mathbb{E}\left(\boldsymbol{R}_{n}^{(1)}\Big|\mathcal{G}_{n-1}\right)$. We assume that
\begin{description}
	\item[(U4)]for every $n$ and $i$ the replacement matrix $\boldsymbol{R}_{n}^{(i)}$ has non-negative values almost surely;
	\item[(U5)]for every $n$ and $i$ every column of the replacement matrix has the same weight almost surely, i.e.\ for every $n$, $i$ and $j$ we have $s\left((\boldsymbol{R}_{n}^{(i)})_{\bullet,j}\right)=\gamma_{1}$, that is, the number of balls added to the urn is constant;
	\item[(U6)]every column of the generating matrices has the same weight almost surely, i.e.\ for every $n$ and $j$ we have $s\left((\boldsymbol{H}_{n})_{\bullet,j}\right)=\gamma_{2}$. This constant is also known as the balance of the urn.
\end{description}

Finally, we assume that
\begin{description}
	\item[(U7)]there exists an irreducible $N\times{}N$ matrix denoted by $\boldsymbol{H}$ such that
	\begin{align*}
		\boldsymbol{H}_{n}\xrightarrow[n\to\infty]{a.s.}\boldsymbol{H}.
	\end{align*}
\end{description}

We denote by $\boldsymbol{v}_{\boldsymbol{H}}^{*}$ the normalized eigenvector of $\boldsymbol{H}$ corresponding to the largest eigenvalue of $\boldsymbol{H}$ such that $||\boldsymbol{v}_{\boldsymbol{H}}^{*}||_{2}=1$.

\begin{remark}
	If $M=1$, then we get back the urn model in \cite{Laruelle_Pages}.
\end{remark}

The next theorem states that the asymptotic composition of colours can be described with the normalized eigenvector corresponding to the largest eigenvalue.
\begin{theorem}
	\label{thm_asymptotic_urn_composition}
	For all integers $M>0$, assumptions $\mathbf{(U1)}$-$\mathbf{(U7)}$ imply that
	\begin{align*}
		\frac{\boldsymbol{C}_{n}}{s(\boldsymbol{C}_{n})}\xrightarrow[n\to\infty]{a.s.}\boldsymbol{v}^{*}_{\boldsymbol{H}}.
	\end{align*}
\end{theorem}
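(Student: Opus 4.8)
The plan is to reduce Theorem~\ref{thm_asymptotic_urn_composition} to the classical stochastic-approximation/ODE method, following the strategy of Laruelle and Pag\`es but carrying the extra bookkeeping needed for $M$ simultaneous draws. First I would introduce the normalized composition $\boldsymbol{Z}_{n}=\boldsymbol{C}_{n}/s(\boldsymbol{C}_{n})$ and compute $\mathbb{E}(\boldsymbol{C}_{n+1}\mid\mathcal{G}_{n})$. By \textbf{(U3)} the $M$ trials are conditionally i.i.d.\ given $\mathcal{G}_{n}$, so $\mathbb{E}\bigl(\sum_{i=1}^{M}\boldsymbol{R}_{n+1}^{(i)}\boldsymbol{\chi}_{n+1}^{(i)}\mid\mathcal{G}_{n}\bigr)=M\,\mathbb{E}\bigl(\boldsymbol{R}_{n+1}^{(1)}\boldsymbol{\chi}_{n+1}^{(1)}\mid\mathcal{G}_{n}\bigr)$; using conditional independence of $\boldsymbol{R}_{n+1}^{(1)}$ and $\boldsymbol{\chi}_{n+1}^{(1)}$ and then \textbf{(U2)}, this equals $M\,\boldsymbol{H}_{n+1}\boldsymbol{Z}_{n}$. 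Note that by \textbf{(U5)} the denominator is deterministic, $s(\boldsymbol{C}_{n})=s(\boldsymbol{C}_{0})+nM\gamma_{1}$, which is a convenient simplification compared with the balanced-but-random case. This yields a Robbins--Monro-type recursion
\begin{align*}
	\boldsymbol{Z}_{n+1}&=\boldsymbol{Z}_{n}+\frac{1}{n+n_{0}}\Bigl(h_{n+1}(\boldsymbol{Z}_{n})+\Delta\boldsymbol{M}_{n+1}\Bigr),
\end{align*}
where $n_{0}=s(\boldsymbol{C}_{0})/(M\gamma_{1})$, the drift is $h_{n+1}(\boldsymbol{z})=\tfrac{1}{\gamma_{1}}\boldsymbol{H}_{n+1}\boldsymbol{z}-\gamma_{2}/\gamma_{1}\cdot\boldsymbol{z}$ up to the rescaling coming from \textbf{(U6)}, and $\Delta\boldsymbol{M}_{n+1}$ is a bounded martingale increment.

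Next I would identify the limiting ODE. By \textbf{(U7)}, $\boldsymbol{H}_{n}\to\boldsymbol{H}$ a.s., so the drift converges (uniformly on the simplex, since everything is linear) to $h(\boldsymbol{z})=\tfrac{1}{\gamma_{1}}\boldsymbol{H}\boldsymbol{z}-\tfrac{\gamma_{2}}{\gamma_{1}}\boldsymbol{z}$, and the associated ODE is $\dot{\boldsymbol{z}}=\tfrac{1}{\gamma_{1}}(\boldsymbol{H}-\gamma_{2}\boldsymbol{I})\boldsymbol{z}$ restricted to $\{\boldsymbol{z}\geq 0,\ \boldsymbol{1}^{T}\boldsymbol{z}=1\}$ (the simplex is invariant because, by \textbf{(U6)}, $\boldsymbol{1}^{T}\boldsymbol{H}=\gamma_{2}\boldsymbol{1}^{T}$, so $\boldsymbol{1}^{T}h(\boldsymbol{z})=0$). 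Since $\boldsymbol{H}$ is irreducible with non-negative entries, Perron--Frobenius gives that $\gamma_{2}$ is the dominant eigenvalue (it is the common column sum) and the corresponding eigenvector can be taken strictly positive; normalizing it to lie on the simplex and then to unit $\ell^2$-norm gives $\boldsymbol{v}_{\boldsymbol{H}}^{*}$. A Lyapunov argument — e.g.\ tracking the ratios $z_i/z_j$, or using the projective (Hilbert) metric contraction for primitive non-negative matrices — shows $\boldsymbol{v}_{\boldsymbol{H}}^{*}$ is the unique equilibrium in the interior and is globally attracting on the simplex minus its boundary faces; one also checks the boundary faces are not stable because irreducibility forces mass to flow into every coordinate.

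Then I would invoke a standard stochastic-approximation convergence theorem (Robbins--Monro / Kushner--Clark, or the martingale-based ODE lemma used in \cite{Laruelle_Pages}): the step sizes $1/(n+n_{0})$ satisfy $\sum 1/(n+n_0)=\infty$, $\sum 1/(n+n_0)^2<\infty$; the noise $\Delta\boldsymbol{M}_{n+1}$ is bounded (all $\boldsymbol{\chi}$ are indicator vectors and, although $\boldsymbol{R}_{n}^{(i)}$ may be random, \textbf{(U4)}--\textbf{(U5)} bound its entries by $\gamma_1$), hence $\sum \tfrac{1}{(n+n_0)^2}\mathbb{E}(\|\Delta\boldsymbol{M}_{n+1}\|^2\mid\mathcal{G}_n)<\infty$ and the martingale part converges a.s.; and $h_n\to h$ with $h$ having a globally attracting equilibrium. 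These are exactly the hypotheses under which $\boldsymbol{Z}_n\to\boldsymbol{v}_{\boldsymbol{H}}^{*}$ a.s. The one point requiring care beyond a verbatim citation is that here the draws are multiple and the replacement matrices are allowed to be random; I would show explicitly that after conditioning on $\mathcal{G}_n$ the $M$-fold sum still collapses to $M\boldsymbol{H}_{n+1}\boldsymbol{Z}_n$ in conditional mean and that the conditional variance of $\Delta\boldsymbol{M}_{n+1}$ is $O(1)$ uniformly (the factor $M$ only enters as a constant), so that the classical framework applies unchanged.

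The main obstacle I anticipate is not the martingale bookkeeping but establishing that $\boldsymbol{v}_{\boldsymbol{H}}^{*}$ is \emph{globally} attracting for the ODE on the whole simplex, including trajectories starting near the boundary — the linearity of $\boldsymbol{H}$ helps, but one must rule out the stable manifolds of boundary equilibria, and this is precisely where irreducibility of $\boldsymbol{H}$ (assumption \textbf{(U7)}) is essential and must be used quantitatively. If needed, I would replace the ODE stability analysis with a direct argument: write $\boldsymbol{Z}_n=\boldsymbol{C}_n/s(\boldsymbol{C}_n)$ and show the a.s.\ convergence by a change of variables making the recursion a perturbed linear iteration whose normalized iterates converge to the Perron eigenvector, the perturbation being summable by \textbf{(U7)} together with a Toeplitz-type averaging lemma.
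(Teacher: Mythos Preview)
Your proposal is correct and follows essentially the same route as the paper's proof: both rewrite the normalized composition as a Robbins--Monro / stochastic-approximation recursion with step $\gamma_n\sim 1/s(\boldsymbol{C}_n)$, compute the drift $M\boldsymbol{H}_{n+1}\boldsymbol{Z}_n$ via \textbf{(U2)}--\textbf{(U3)}, absorb the $\boldsymbol{H}_{n+1}\to\boldsymbol{H}$ convergence from \textbf{(U7)} into a vanishing remainder, bound the martingale increment using \textbf{(U4)}--\textbf{(U5)}, and then invoke the ODE method with limiting equation $\dot{\boldsymbol{z}}=-M(\boldsymbol{I}_N-\boldsymbol{H})\boldsymbol{z}$ and Perron--Frobenius to identify $\boldsymbol{v}^{*}_{\boldsymbol{H}}$ as the globally stable equilibrium on the simplex. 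Your treatment of the stability step (Hilbert metric, ruling out boundary faces) is in fact more detailed than the paper's, which dispatches it in two lines; note also that \textbf{(U5)} already forces $\gamma_1=\gamma_2$, so your $\gamma_2/\gamma_1$ factors collapse.
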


To prove this theorem, we will use the same method as Laruelle and Pag\`es in \cite{Laruelle_Pages}, also known as the ordinary differential equation (ODE) method, which is a powerful tool of stochastic approximation.

Let us have a filtered probability space denoted by $(\Omega, (\mathcal{G}_{n})_{n\geq{}0},\mathbb{P})$ and consider the following recurrence equation:
\begin{align*}
\boldsymbol{\vartheta}_{n+1}&=\boldsymbol{\vartheta}_{n}-\gamma_{n+1}h(\boldsymbol{\vartheta}_{n})+\gamma_{n+1}\left(\Delta\boldsymbol{M}_{n+1}+\boldsymbol{r}_{n+1}\right)
\end{align*}
for $n\geq{}n_{0}$, where $h:\mathbb{R}^{N}\to\mathbb{R}^{N}$ is a locally Lipschitz continuous function, $\boldsymbol{\vartheta}_{n_{0}}$ is an $\mathbb{R}^{N}$-valued $\mathcal{G}_{n_{0}}$ measurable random variable, $(\gamma_{n})_{n\geq{}n_{0}+1}$ is a (deterministic) sequence of positive numbers, $\left(\Delta\boldsymbol{M}_{n}\right)_{n\geq{}n_{0}+1}$ is a martingale difference in $(\mathcal{G}_{n})_{n\geq{}n_{0}}$ and finally $(\boldsymbol{r}_{n})_{n\geq{}n_{0}+1}$ is a sequence of $(\mathcal{G}_{n})_{n\geq{}n_{0}+1}$-adapted random variables.

\begin{theoremx}[Almost sure convergence with ODE method, Theorem A.1 in \cite{Laruelle_Pages}]
	\label{thm_ODE_method}
	Assume that we have
	\begin{align*}
		\boldsymbol{r}_{n}\xrightarrow[n\to\infty]{\textrm{a.s.}} 0,\qquad
	\sup_{n\geq{}n_{0}}\mathbb{E}\left(||\Delta\boldsymbol{M}_{n+1}||_{2}^{2}\Big|\mathcal{G}_{n}\right)<\infty\textrm{ a.s.}
	\end{align*}
	and the sequence $(\gamma_{n})_{n\geq{}n_{0}}$ satisfies the following assumptions
	\begin{align*}
		\sum_{n=n_{0}}^{\infty}\gamma_{n}=\infty\qquad\textrm{and}\qquad\sum_{n=n_{0}}^{\infty}\gamma_{n}^{2}<\infty.
	\end{align*}
	We denote by $\Theta_{\infty}$ the almost sure limiting values of the sequence $(\vartheta_{n})_{n\geq{}n_{0}}$ as $n\to\infty$. Then $\Theta_{\infty}$ is almost surely a compact connected set.
	
	Let us have a look at the following ordinary differential equation:
	\begin{align}
		\label{eq_ode}
		\dot{\boldsymbol{\vartheta}}(t)&=-h(\boldsymbol{\vartheta}(t))\textrm{, where $t\geq{}t_{0}$}.
	\end{align}
	The flow of the above differential equation on $\Theta_{\infty}$ is $\varphi(t,t_{0},\boldsymbol{\vartheta}_{0})=\boldsymbol{\vartheta}(t)$, if $\boldsymbol{\vartheta}(t)$ is the solution of the this differential equation with initial value $\boldsymbol{\vartheta}(t_{0})=\boldsymbol{\vartheta}_{0}$.
	
	We assume that for every $\boldsymbol{\vartheta}_{0}\in\Theta_{\infty}$ the flow $\varphi(t,t_{0},\boldsymbol{\vartheta}_{0})$ is stable, i.e.\ for every $\varepsilon>0$ and $t_{1}>t_{0}$ there exists $\delta>0$, such that
	\begin{align*}
		\textrm{if } |\boldsymbol{\tau}-\varphi(t_{1},t_{0},\boldsymbol{\vartheta}_{0})|<\delta\textrm{, then } |\varphi(t,t_{1},\boldsymbol{\tau})-\varphi(t,t_{0},\boldsymbol{\vartheta}_{0})|<\varepsilon\textrm{ for every }t\geq{}t_{1}.
	\end{align*}
	If $\boldsymbol{\vartheta}^{*}\in\Theta_{\infty}$ is a uniformly stable equilibrium on $\Theta_{\infty}$ of the ordinary differential equation defined in \eqref{eq_ode}, i.e.\
	\begin{align*}
		\sup_{\boldsymbol{\vartheta}_{0}\in\Theta_{\infty}}|\varphi(t,t_{0},\boldsymbol{\vartheta}_{0})-\boldsymbol{\vartheta}^{*}|\xrightarrow[t\to\infty]{}0,
	\end{align*}
	then we have
	\begin{align*}
		\boldsymbol{\vartheta}_{n}\xrightarrow[n\to\infty]{a.s.}\boldsymbol{\vartheta}^{*}.
	\end{align*}
\end{theoremx}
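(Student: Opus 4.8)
The plan is to prove Theorem~\ref{thm_ODE_method} by the classical ODE (stochastic approximation) method, viewing the recursion as a perturbed, non-uniform Euler scheme for the flow $\dot{\boldsymbol{\vartheta}}=-h(\boldsymbol{\vartheta})$, and following the argument of Laruelle and Pag\`es \cite{Laruelle_Pages}. First I would isolate and kill the noise. Put $\boldsymbol{N}_{n}=\sum_{k=n_{0}+1}^{n}\gamma_{k}\Delta\boldsymbol{M}_{k}$; this is an $(\mathcal{G}_{n})$-martingale whose conditional quadratic variation satisfies $\sum_{k}\gamma_{k}^{2}\,\mathbb{E}(||\Delta\boldsymbol{M}_{k}||_{2}^{2}\mid\mathcal{G}_{k-1})\leq\big(\sup_{n}\mathbb{E}(||\Delta\boldsymbol{M}_{n+1}||_{2}^{2}\mid\mathcal{G}_{n})\big)\sum_{k}\gamma_{k}^{2}<\infty$ almost surely, by the hypotheses on the noise and on $\sum\gamma_{n}^{2}$. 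Hence $\boldsymbol{N}_{n}$ converges almost surely, so its tails $\sum_{k>n}\gamma_{k}\Delta\boldsymbol{M}_{k}$ are uniformly small; together with $\boldsymbol{r}_{n}\to 0$ this shows that the total perturbation accumulated over any forward window of bounded (continuous) length vanishes as the window is pushed to infinity.

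Next I would pass to continuous time. Set $t_{n_{0}}=0$ and $t_{n}=\sum_{k=n_{0}+1}^{n}\gamma_{k}$, so $t_{n}\to\infty$ because $\sum\gamma_{n}=\infty$, and let $\bar{\boldsymbol{\vartheta}}$ be the piecewise-affine interpolation with $\bar{\boldsymbol{\vartheta}}(t_{n})=\boldsymbol{\vartheta}_{n}$. Since $\boldsymbol{\vartheta}_{n+1}-\boldsymbol{\vartheta}_{n}=\gamma_{n+1}\big(-h(\boldsymbol{\vartheta}_{n})+\Delta\boldsymbol{M}_{n+1}+\boldsymbol{r}_{n+1}\big)$ and $\gamma_{n}\to 0$, consecutive increments tend to $0$; on the event that the orbit is bounded this forces the set $\Theta_{\infty}$ of accumulation points to be compact and connected, which is the first assertion. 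A Gr\"onwall comparison of $\bar{\boldsymbol{\vartheta}}$ against the genuine solution of \eqref{eq_ode}, using the noise estimate above and the local Lipschitz property of $h$ (made global by restricting to the compact orbit), then yields that $\bar{\boldsymbol{\vartheta}}$ is an asymptotic pseudotrajectory of $\varphi$: for each fixed horizon $T$, the interpolated path started at time $t$ stays within $o(1)$ of $\varphi(\cdot,t,\bar{\boldsymbol{\vartheta}}(t))$ on $[0,T]$ as $t\to\infty$. By the standard theory of asymptotic pseudotrajectories (Bena\"im), $\Theta_{\infty}$ is then internally chain transitive, hence invariant under $\varphi$.

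Finally I would identify the limit. Given a uniformly stable equilibrium $\boldsymbol{\vartheta}^{*}$ that uniformly attracts all of $\Theta_{\infty}$, I claim $\Theta_{\infty}=\{\boldsymbol{\vartheta}^{*}\}$. For any $\boldsymbol{p}\in\Theta_{\infty}$, invariance keeps the whole orbit $\{\varphi(t,t_{0},\boldsymbol{p}):t\geq t_{0}\}$ inside $\Theta_{\infty}$, while the uniform attraction hypothesis drives it to $\boldsymbol{\vartheta}^{*}$, so $\boldsymbol{\vartheta}^{*}\in\Theta_{\infty}$; the stability of $\boldsymbol{\vartheta}^{*}$ then supplies a neighbourhood the flow cannot leave once entered, which, together with invariance and chain transitivity, is incompatible with any further accumulation point and collapses $\Theta_{\infty}$ to the single point $\{\boldsymbol{\vartheta}^{*}\}$. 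Since $\Theta_{\infty}$ is precisely the set of limit points of $(\boldsymbol{\vartheta}_{n})$, this gives $\boldsymbol{\vartheta}_{n}\to\boldsymbol{\vartheta}^{*}$ almost surely.

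I expect the main obstacle to be the pseudotrajectory estimate: uniformly controlling the martingale tail and the linearization error over a sliding, growing time window while $h$ is only locally Lipschitz requires an a priori boundedness argument to localize $h$, and the passage from a bounded sequence with vanishing increments to an invariant, internally chain transitive limit set is the delicate step that actually couples the discrete recursion to the continuous flow. Everything downstream — compactness and connectedness of $\Theta_{\infty}$ and the final collapse to $\boldsymbol{\vartheta}^{*}$ — is comparatively routine once the pseudotrajectory property is in hand.
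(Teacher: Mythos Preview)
The paper does not prove this statement at all: Theorem~\ref{thm_ODE_method} is quoted verbatim as Theorem~A.1 of \cite{Laruelle_Pages} and used as a black box in the proof of Theorem~\ref{thm_asymptotic_urn_composition}. There is therefore no ``paper's own proof'' to compare your proposal against.

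That said, your sketch is a faithful outline of the classical stochastic-approximation/ODE argument (Kushner--Clark, Bena\"im, and the version recorded in \cite{Laruelle_Pages}): kill the martingale noise via $\sum\gamma_{n}^{2}<\infty$ and the $L^{2}$ bound on the increments, interpolate in the time scale $t_{n}=\sum_{k\leq n}\gamma_{k}$, compare with the flow of \eqref{eq_ode} by Gr\"onwall to obtain the asymptotic pseudotrajectory property, and then invoke the internally chain transitive structure of $\Theta_{\infty}$ together with uniform stability of $\boldsymbol{\vartheta}^{*}$ to force $\Theta_{\infty}=\{\boldsymbol{\vartheta}^{*}\}$. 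The one genuine soft spot you already flag yourself: the argument requires a priori almost-sure boundedness of $(\boldsymbol{\vartheta}_{n})$ in order to localize the locally Lipschitz $h$ and to make the compactness/connectedness claim for $\Theta_{\infty}$ meaningful. The paper (and \cite{Laruelle_Pages}) sidestep this because in the intended application $\boldsymbol{\vartheta}_{n}=\widetilde{\boldsymbol{C}}_{n}$ lives in the simplex $\mathcal{S}$, so boundedness is automatic; in your write-up you should make that hypothesis explicit rather than leave it buried in ``on the event that the orbit is bounded''.
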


\begin{proof}[Proof of Theorem $\ref{thm_asymptotic_urn_composition}$]
	We will prove the almost sure convergence of the normalized composition vector by using the ODE method. This proof is the generalization of the similar proof of Laruelle and Pag\`es in \cite{Laruelle_Pages}.
	
	For every $n\geq{}1$ we have
	\begin{align}
	\label{eq_urn_dynamics_extended}
		\boldsymbol{C}_{n+1}&=\boldsymbol{C}_{n}+\sum_{i=1}^{M}\boldsymbol{R}_{n+1}^{(i)}\boldsymbol{\chi}_{n+1}^{(i)}=\boldsymbol{C}_{n}+\mathbb{E}\left(\sum_{i=1}^{M}\boldsymbol{R}_{n+1}^{(i)}\boldsymbol{\chi}_{n+1}^{(i)}\Bigg|\mathcal{G}_{n}\right)+\Delta\boldsymbol{M}_{n+1},
	\end{align}
	where
	\begin{align*}
		\Delta\boldsymbol{M}_{n+1}&=\sum_{i=1}^{M}\boldsymbol{R}_{n+1}^{(i)}\boldsymbol{\chi}_{n+1}^{(i)}-\mathbb{E}\left(\sum_{i=1}^{M}\boldsymbol{R}_{n+1}^{(i)}\boldsymbol{\chi}_{n+1}^{(i)}\Bigg|\mathcal{G}_{n}\right).
	\end{align*}
	Recall that the generating matrices are defined as $\boldsymbol{H}_{n}^{(i)}=\mathbb{E}\left(\boldsymbol{R}_{n}^{(i)}\Big|\mathcal{G}_{n-1}\right)$. By using this and assumption \textbf{(U3)} on the conditional independence of $\boldsymbol{R}_{n+1}^{(i)}$ and $\boldsymbol{\chi}_{n+1}^{(i)}$, we have
	\begin{align*}
		\mathbb{E}\left(\sum_{i=1}^{M}\boldsymbol{R}_{n+1}^{(i)}\boldsymbol{\chi}_{n+1}^{(i)}\Bigg|\mathcal{G}_{n}\right)&=\sum_{i=1}^{M}\mathbb{E}\left(\boldsymbol{R}_{n+1}^{(i)}\boldsymbol{\chi}_{n+1}^{(i)}\Big|\mathcal{G}_{n}\right)\\
		&=\sum_{i=1}^{M}\sum_{j=1}^{N}\mathbb{E}\left(\boldsymbol{R}_{n+1}^{(i)}\mathrm{Ind}(\boldsymbol{\chi}_{n+1}^{(i)}=\boldsymbol{e}_{j})\boldsymbol{e}_{j}\Big|\mathcal{G}_{n}\right)\\
		&=\sum_{i=1}^{M}\left[\sum_{j=1}^{N}\mathbb{E}\left(\boldsymbol{R}_{n+1}^{(i)}\Big|\mathcal{G}_{n}\right)\mathbb{P}\left(\boldsymbol{\chi}_{n+1}^{(i)}=\boldsymbol{e}_{j}\Big|\mathcal{G}_{n}\right)\boldsymbol{e}_{j}\right]\\
		&=\sum_{i=1}^{M}\left[\boldsymbol{H}_{n+1}^{(i)}\sum_{j=1}^{N}\frac{C_{n,j}}{s(\boldsymbol{C}_{n})}\boldsymbol{e}_{j}\right]=\left(\sum_{i=1}^{M}\boldsymbol{H}_{n+1}^{(i)}\right)\frac{\boldsymbol{C}_{n}}{s(\boldsymbol{C}_{n})}.
	\end{align*}
	
	By normalizing equation \eqref{eq_urn_dynamics_extended}, we have
	\begin{align}
		\label{eq_normalized_urn_evolution}
		\frac{\boldsymbol{C}_{n+1}}{s(\boldsymbol{C}_{n+1})}&=\frac{\boldsymbol{C}_{n}}{s(\boldsymbol{C}_{n})}+\frac{1}{s(\boldsymbol{C}_{n+1})}\left[\left(\sum_{i=1}^{M}\boldsymbol{H}_{n+1}^{(i)}\right)-M\boldsymbol{I}_{N}\right]\frac{\boldsymbol{C}_{n}}{s(\boldsymbol{C}_{n})}+\frac{\Delta\boldsymbol{M}_{n+1}}{s(\boldsymbol{C}_{n+1})},
	\end{align}
	where $\boldsymbol{I}_{N}$ is the $N$-dimensional identity matrix. To verify the above reformulation we can check that
	\begin{align*}
		\frac{\boldsymbol{C}_{n}}{s(\boldsymbol{C}_{n})}-\frac{1}{s(\boldsymbol{C}_{n+1})}\cdot{}M\boldsymbol{I}_{N}\frac{\boldsymbol{C}_{n}}{s(\boldsymbol{C}_{n})}&=\frac{\boldsymbol{C}_{n}}{s(\boldsymbol{C}_{n+1})},
	\end{align*}
	by using the fact that $s(\boldsymbol{C}_{n+1})=s(\boldsymbol{C}_{n})+M$.
	
	
	Let us define $\widetilde{\boldsymbol{C}}_{n}=\boldsymbol{C}_{n}/s(\boldsymbol{C}_{n})$. Equation \eqref{eq_normalized_urn_evolution} can be rewritten as the canonical stochastic approximation process in the following way:
	\begin{align*}
		\widetilde{\boldsymbol{C}}_{n+1}&=\widetilde{\boldsymbol{C}}_{n}+\frac{1}{s(\boldsymbol{C}_{n+1})}\left[\left(\sum_{i=1}^{M}\boldsymbol{H}_{n+1}^{(i)}\right)-M\boldsymbol{I}_{N}\right]\widetilde{\boldsymbol{C}}_{n}+\frac{\Delta\boldsymbol{M}_{n+1}}{s(\boldsymbol{C}_{n+1})}\\
		&=\widetilde{\boldsymbol{C}}_{n}-\frac{1}{s(\boldsymbol{C}_{n+1})}M\left(\boldsymbol{I}_{N}-\boldsymbol{H}\right)\widetilde{C}_{n}+\frac{1}{s(\boldsymbol{C}_{n+1})}\left(\Delta\boldsymbol{M}_{n+1}+\boldsymbol{r}_{n+1}\right)
	\end{align*}
	with step size $\gamma_{n}=1/s(\boldsymbol{C}_{n})$ and the error term is defined as
	\begin{align*}
		\boldsymbol{r}_{n+1}&=\left[\left(\sum_{i=1}^{M}\boldsymbol{H}_{n+1}^{(i)}\right)-M\boldsymbol{H}\right]\widetilde{\boldsymbol{C}}_{n}.
	\end{align*}
	
	To apply the ODE method we need to check the assumptions of Theorem \ref{thm_ODE_method}.
	
	Since $\widetilde{\boldsymbol{C}}_{n}$ is bounded a.s., by using assumption \textbf{(U7)} we have $\boldsymbol{r}_{n}\xrightarrow[n\to\infty]{a.s.}0$. Notice that, we have
	\begin{align*}
		\Bigg|\Bigg|\sum_{i=1}^{M}\boldsymbol{R}_{n+1}^{(i)}\boldsymbol{\chi}_{n+1}^{(i)}\Bigg|\Bigg|_{2}^{2}\leq{}M^{2},
	\end{align*}
	since the number of balls added in a step equals to $M$, which is fixed. Consequently we have
	\begin{align*}
		\sup_{n\geq{}1}\mathbb{E}\left(\Bigg|\Bigg|\sum_{i=1}^{M}\boldsymbol{R}_{n+1}^{(i)}\boldsymbol{\chi}_{n+1}^{(i)}\Bigg|\Bigg|_{2}^{2}\Bigg|\mathcal{G}_{n}\right)<\infty,
	\end{align*}
	thus we obtain that $\sup_{n\geq{}1}\mathbb{E}\left(||\Delta\boldsymbol{M}_{n+1}||_{2}^{2}\Big|\mathcal{G}_{n}\right)<\infty$ almost surely.
	
	It is obvious that the (almost sure) limiting values of $\widetilde{\boldsymbol{C}}_{n}$ as $n\to\infty$ are in the $N$-dimensional simplex denoted by $\mathcal{S}=\left\{u\in\mathbb{R}_{+}^{N}|s(u)=1\right\}$. Let us have a look at the following ordinary differential equation:
	\begin{align*}
		\dot{y}&=-M(\boldsymbol{I}_{N}-\boldsymbol{H})y,
	\end{align*}
	where $y:\mathbb{R}^{N}\to\mathbb{R}^{N}$ is a differentiable function. By using assumption \textbf{(U7)} we obtain that $\boldsymbol{v}^{*}_{\boldsymbol{H}}$ is the unique zero of the function $y\mapsto-M(\boldsymbol{I}_{N}-\boldsymbol{H})Y$ on $y\in\mathcal{S}$. Let us take the restriction of the above differential equation to the set $\mathcal{V}_{0}=\left\{u\in\mathbb{R}_{+}^{N}|s(u)=0\right\}$. By using assumption \textbf{(U7)} we conclude that the left eigenvalues of $M(\boldsymbol{I}_{N}-\boldsymbol{H})$ have positive real part. As a consequence we get that $\boldsymbol{v}^{*}_{\boldsymbol{H}}$ is a uniformly stable equilibrium of the equation on $\mathcal{S}$. By using the ODE method we conclude that
	\begin{align*}
		\frac{\boldsymbol{C}_{n}}{s(\boldsymbol{C}_{n})}\xrightarrow[n\to\infty]{a.s.}\boldsymbol{v}^{*}_{\boldsymbol{H}}.
	\end{align*}
\end{proof}

\section{Proof of the main theorem}
First, we need to prove the following lemma on the asymptotic proportion of edges of different types. This is where we use Theorem \ref{thm_asymptotic_urn_composition} on the urn models.
\begin{lemma}
	\label{lemma_asymptotic_edge_composition}
	In the perturbed Barab\'asi--Albert random graph, if we assume that $\boldsymbol{F}\in(0,1)^{N\times{}N}$, then for every $l\in[N]$ we have $\psi_{n}^{(l)}=\frac{|E_{n}^{(l)}|}{|E_{n}|}\to\psi^{(l)}$ almost surely as $n\to\infty$, where $\psi^{(l)}\in(0,1)$ is a deterministic constant. If we also assume that $\boldsymbol{F}=(\varepsilon_{k,l})_{k,l=1}^{N}$ is symmetric, then for every $l\in[N]$ we have $\psi^{(l)}=\frac{1}{N}$.
\end{lemma}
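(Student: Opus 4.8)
The plan is to realize the vector of edge-type counts as an instance of the urn process \eqref{eq_urn_dynamics} and then invoke Theorem~\ref{thm_asymptotic_urn_composition}. Take the $N$ colours to be the $N$ edge types and set $\boldsymbol{C}_n=\bigl(|E_n^{(1)}|,\dots,|E_n^{(N)}|\bigr)$, so that $s(\boldsymbol{C}_n)=|E_n|$ and $s(\boldsymbol{C}_{n+1})=s(\boldsymbol{C}_n)+M$, and $\psi_n^{(l)}=C_{n,l}/s(\boldsymbol{C}_n)$. For trial $i\in[M]$ of step $n+1$, let $\boldsymbol{\chi}_{n+1}^{(i)}$ be the indicator vector of the \emph{pre-perturbation} type of the $i^{\textrm{th}}$ new edge, and let $\boldsymbol{R}_{n+1}^{(i)}$ be the random $0$–$1$ matrix whose $k^{\textrm{th}}$ column is the unit vector recording the post-perturbation type of an edge whose pre-type is $k$; I would build $\boldsymbol{R}_{n+1}^{(i)}$ from an auxiliary independent source of randomness (one independent perturbation outcome for each colour) so that it is conditionally independent of the draw, and then $\boldsymbol{R}_{n+1}^{(i)}\boldsymbol{\chi}_{n+1}^{(i)}$ is precisely the unit vector of the actual type of the $i^{\textrm{th}}$ new edge, making the graph recursion for edge-type counts coincide with \eqref{eq_urn_dynamics}.

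The key step is verifying $\mathbf{(U2)}$ and the conditional law in $\mathbf{(U3)}$. When a new edge selects an already-existing endpoint $w$ with probability proportional to $\mathrm{deg}_n(w)$ and then adopts the type of a uniformly chosen edge at $w$, its pre-type is the type of a uniformly random half-edge of $G_n$; since $G_n$ has no loops there are $2|E_n^{(j)}|$ half-edges lying on type-$j$ edges out of $2|E_n|$, so conditionally on $\mathcal{F}_n$ the pre-type is $j$ with probability $|E_n^{(j)}|/|E_n|=C_{n,j}/s(\boldsymbol{C}_n)$, and the $M$ pre-types are conditionally i.i.d.\ because the $M$ endpoints are chosen independently and, given the endpoints, the types are independent. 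As this conditional law is a deterministic function of the $\mathcal{G}_n$-measurable quantities $C_{n,j}$, the tower property transports it down to $\mathcal{G}_n$, giving $\mathbf{(U2)}$ and the conditional independence of the draws in $\mathbf{(U3)}$; independence of the replacement matrices among themselves and from the draws is built into the construction. The remaining assumptions are routine: $\mathbf{(U1)}$ holds since $|E_0^{(l)}|>0$ for all $l$; $\mathbf{(U4)}$ is immediate; $\mathbf{(U5)}$ holds with $\gamma_1=1$ because each column of $\boldsymbol{R}_{n+1}^{(i)}$ is a unit vector (one edge added per trial); and $\mathbf{(U6)}$ holds with $\gamma_2=1$ because the $k^{\textrm{th}}$ column of $\boldsymbol{H}_{n+1}=\mathbb{E}(\boldsymbol{R}_{n+1}^{(1)}\mid\mathcal{G}_n)$ has entries $\varepsilon_{k,l}^{(n+1)}$, summing to $1$. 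Finally $\boldsymbol{H}_n\to\boldsymbol{H}$ with $(\boldsymbol{H})_{l,k}=\varepsilon_{k,l}$, i.e.\ $\boldsymbol{H}=\boldsymbol{F}^{T}$; since $\boldsymbol{F}\in(0,1)^{N\times N}$ has strictly positive entries, $\boldsymbol{H}$ is primitive, hence irreducible, which is $\mathbf{(U7)}$.

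With $\mathbf{(U1)}$–$\mathbf{(U7)}$ in hand, Theorem~\ref{thm_asymptotic_urn_composition} gives $(\psi_n^{(l)})_{l\in[N]}=\boldsymbol{C}_n/s(\boldsymbol{C}_n)\to\boldsymbol{v}^{*}_{\boldsymbol{H}}$ almost surely, a deterministic limit; since $\boldsymbol{C}_n/s(\boldsymbol{C}_n)$ has nonnegative coordinates summing to $1$, so does $\boldsymbol{v}^{*}_{\boldsymbol{H}}$, and by Perron–Frobenius all its coordinates are strictly positive, so (as $\boldsymbol{F}\in(0,1)^{N\times N}$ forces $N\geq2$) each coordinate lies in $(0,1)$, which is the claim $\psi^{(l)}\in(0,1)$. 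For the last assertion, if $\boldsymbol{F}$ is symmetric then being row-stochastic it is doubly stochastic, so $\boldsymbol{H}=\boldsymbol{F}^{T}=\boldsymbol{F}$ satisfies $\boldsymbol{H}\boldsymbol{1}=\boldsymbol{1}$ (its $l^{\textrm{th}}$ row sums to $\sum_{k}\varepsilon_{k,l}$, a column sum of $\boldsymbol{F}$, equal to $1$). Thus $\boldsymbol{1}$ is the Perron eigenvector of $\boldsymbol{H}$ up to scaling, and after normalization $\boldsymbol{v}^{*}_{\boldsymbol{H}}=\tfrac1N\boldsymbol{1}$, so $\psi^{(l)}=1/N$.

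I expect the main obstacle to be the rigorous identification of the edge-type process with the urn model at the level of the coarser filtration $(\mathcal{G}_n)$: one must check that $\mathbf{(U2)}$ and the conditional independence in $\mathbf{(U3)}$ — most naturally established conditionally on the full graph history $\mathcal{F}_n$ — descend to $\mathcal{G}_n$, and one must package the perturbation as a replacement matrix genuinely independent of the draw, as $\mathbf{(U3)}$ requires; the Perron–Frobenius bookkeeping and the symmetric computation are then straightforward.
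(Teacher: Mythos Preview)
Your proposal is correct and follows essentially the same approach as the paper: identify the edge-type count vector with the urn composition $\boldsymbol{C}_n$, build the replacement matrices $\boldsymbol{R}_n^{(i)}$ column-by-column from independent perturbation outcomes so that $\boldsymbol{H}_n=\boldsymbol{F}_n^{T}\to\boldsymbol{F}^{T}=\boldsymbol{H}$, verify \textbf{(U1)}--\textbf{(U7)}, and apply Theorem~\ref{thm_asymptotic_urn_composition}; for the symmetric case both use that $\boldsymbol{F}$ is doubly stochastic. Your half-edge argument for \textbf{(U2)} and the tower-property transfer from $\mathcal{F}_n$ to $\mathcal{G}_n$ make explicit two points the paper handles with the single sentence ``by the dynamics of the model assumptions \textbf{(U2)}--\textbf{(U3)} hold'', and your Perron--Frobenius remark justifying $\psi^{(l)}\in(0,1)$ is a detail the paper omits, but otherwise the proofs coincide.
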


\begin{proof}
	In the perturbed Barab\'asi--Albert model, we can use the following urn model to understand the asymptotic composition of the number edges of type $l$ for every $l\in[N]$. Let us have $\boldsymbol{C}_{0}=(|E_{0}^{(l)}|,l\in[N])$ and for every $n\geq{}1$ and $i\in[M]$ we define $\boldsymbol{R}_{n}^{(i)}=(\tau_{n;k,l}^{(i)})_{k,l=1}^{N}$, where $\tau_{n;k,l}^{(i)}$ is a Bernoulli distributed random variable with expectation equal to $\varepsilon_{l,k}^{(n)}$, furthermore we assume that for every $l\in[N]$ we have $\sum_{k=1}^{N}\tau_{n;k,l}^{(i)}=1$ and the columns of the matrix $\boldsymbol{R}_{n}^{(i)}$ are independent of each other. Clearly, we have
	\begin{align*}
		\boldsymbol{H}_{n}=\mathbb{E}\left(\boldsymbol{R}_{n}^{(1)}\big|\mathcal{G}_{n-1}\right)=\boldsymbol{F}_{n}^{T}.
	\end{align*}
	To apply Theorem \ref{thm_asymptotic_urn_composition} we have to check the assumptions of the general urn model and find $\boldsymbol{v}_{\boldsymbol{H}}^{*}$ to complete the proof of Lemma \ref{lemma_asymptotic_edge_composition}.
	
	Assumption \textbf{(U1)} holds due to the fact that there is at least one edge of each type in the initial configuration of the perturbed Barab\'asi--Albert random graph. By the dynamics of the model assumptions \textbf{(U2)}-\textbf{(U3)} hold (recall that we do not update the degrees of the vertices until the end of the steps). Assumptions \textbf{(U4)}-\textbf{(U6)} hold because of the choice of $\boldsymbol{R}_{n}^{(i)}$. Notice that in this case $\gamma_{1}=\gamma_{2}=1$. For assumption \textbf{(U7)} we need to show that there exists an irreducible $N\times{}N$ matrix denoted by $\boldsymbol{H}$ such that $\boldsymbol{H}_{n}\xrightarrow[n\to\infty]{a.s.}\boldsymbol{H}$. In the perturbed Barab\'asi--Albert random graph we assumed that $\boldsymbol{H}_{n}^{T}=\boldsymbol{F}_{n}\to\boldsymbol{F}$ in every entry. Since $\boldsymbol{F}$ is irreducible, we can choose $\boldsymbol{H}=\boldsymbol{F}^{T}$. Hence all assumptions of Theorem \ref{thm_asymptotic_urn_composition} hold.
	
	The normalized (right) eigenvector of $\boldsymbol{H}$ corresponding to the eigenvalue with the largest real part is $\boldsymbol{v}_{\boldsymbol{H}}^{*}=\left(\psi^{(1)},\dots,\psi^{(N)}\right)$. Notice that this is also the normalized left eigenvector of $\boldsymbol{F}$ corresponding to the same eigenvalue. By using Theorem \ref{thm_asymptotic_urn_composition} we get the first part of the lemma.
	
	If we also assume that $\boldsymbol{F}$ is symmetric, then $\boldsymbol{F}$ is a double-stochastic matrix. It follows that for every $l\in[N]$ we have $\psi^{(l)}=\frac{1}{N}$.
\end{proof}

Now, we can prove our main result on the asymptotic degree distribution of the perturbed Barab\'asi--Albert random graph.


We summarize a general graph model with its assumptions \textbf{(GM1)}-\textbf{(GM5)} from \cite{Backhausz_Rozner}. We will see that the perturbed Barab\'asi--Albert model is a special case.

In the general model, we have the following dynamics, in the $n^{\textrm{th}}$ step:
\begin{enumerate}
	\item{}A new vertex $v_{n}$ is born.
	\item{}Vertex $v_{n}$ is randomly connected to some of the old vertices with a few edges.
	\item{}Every new edge gets a type randomly.
\end{enumerate}
In addition, we have some assumptions on the general model:
\begin{description}
	\item[(GM1)]For every $n$, we assume that the conditional distribution of the number of new edges of type $l$ connected to $v\in{}V_{n-1}$, conditionally with respect of $\mathcal{F}_{n-1}$, depends only on $\textbf{deg}_{n-1}(v)$ for every $l\in[N]$. By using this assumption, we denote by $p_{\boldsymbol{d}}^{(n)}(\boldsymbol{i})$, where $\boldsymbol{i}=(i_{1},\dots,i_{N})$, the conditional probability that a vertex with generalized degree $\boldsymbol{d}$ gets exactly $i_{l}$ new edges of type $l$ for every $l\in[N]$, conditionally with respect to $\mathcal{F}_{n-1}$. In other words, for every existing vertex the probability of having a new edge only depends on the actual degree of the vertex.
	\item[(GM2)]For every $\boldsymbol{d}\in\mathbb{N}^{N}$, there exists $\delta>0$ and $C>0$, such that
	\begin{align*}
	\mathbb{E}\left(|X_{n}(\boldsymbol{d})-X_{n-1}(\boldsymbol{d})|^{2}\big|\mathcal{F}_{n-1}\right)&\leq{}Cn^{1-\delta}
	\end{align*}
	holds almost surely for every $n$. This means that the difference of the number of vertices with generalized degree $\boldsymbol{d}$ in the actual step and in the previous step is bounded in some sense.
	\item[(GM3)]For every $\boldsymbol{d}\in\mathbb{N}^{N}$, we define the sequence $\left(u_{n}(\boldsymbol{d})\right)_{n=1}^{\infty}$ by the following equality:
	\begin{align*}
	p_{\boldsymbol{d}}^{(n)}(\boldsymbol{0})&=1-\frac{u_{n}(\boldsymbol{d})}{n}.
	\end{align*}
	The sequence $\left(u_{n}(\boldsymbol{d})\right)_{n=1}^{\infty}$ is non-negative and predictable with respect to the filtration $\mathcal{F}$. We assume that there is a positive random variable denoted by $u(\boldsymbol{d})$, such that $u_{n}(\boldsymbol{d})\to{}u(\boldsymbol{d})$ almost surely as $n\to\infty$. This means that for a vertex with generalized degree $\boldsymbol{d}$ the probability of not receiving any new edge stabilizes in some sense.
	\item[(GM4)]For every $\boldsymbol{d}\in\mathbb{N}^{N}$ and for every $\boldsymbol{i}\in\mathbb{N}^{N}$, such that $s(\boldsymbol{i})\geq{}1$, we assume that there are families of non-negative random variables denoted by $r^{(l)}(\cdot)$ where $l\in[N]$, such that	
	\begin{align}
	\label{eq_GM4}
		\lim_{n\to\infty}np_{\boldsymbol{d}-\boldsymbol{i}}^{(n)}(\boldsymbol{i})&=
		\left\{
		\begin{array}{l l}
			r^{(l)}(\boldsymbol{d}-\boldsymbol{e}_{l}) & \textrm{if $\boldsymbol{i}=\boldsymbol{e}_{l}$} \\
			0 & \textrm{otherwise}
		\end{array}
	\right.
	\end{align}
	holds almost surely. Assumption \textbf{(GM4)} states that for an existing vertex the probability of receiving edges of different types to have generalized degree $\boldsymbol{d}$ has a non-trivial limit if and only if one of the edges of a given type is missing. That is, although it may happen that a vertex gets more than one edges in a step, this has probability of $O\left(\frac{1}{n^{2}}\right)$ and disappears from the asymptotic equations.
	\item[(GM5)]For every $n$ and for every $\boldsymbol{d}\in\mathbb{N}^{N}$, we denote by $q^{(n)}(\boldsymbol{d})$ the conditional probability that the new vertex $v_{n}$ is connected to the existing vertices with exactly $d_{l}$ edges of type $l$, conditionally with respect to $\mathcal{F}_{n-1}$. We assume that there exists a non-negative random variable denoted by $q(\boldsymbol{d})$, such that $q^{(n)}(\boldsymbol{d})\to{}q(\boldsymbol{d})$ almost surely as $n\to\infty$. This means that for the new vertex in the $n^{\textrm{th}}$ step the probability of having generalized degree $\boldsymbol{d}$ converges as $n\to\infty$.
\end{description}


To prove the existence of the asymptotic degree distribution, we can use Theorem 1 in \cite{Backhausz_Rozner}.

\begin{theoremx}[Theorem 1 in \cite{Backhausz_Rozner}]
	\label{thm_Backhausz_Rozner}
	If a sequence of random graphs with multi-type edges satisfies assumptions $\mathbf{(GM1)-(GM5)}$, then for every $\boldsymbol{d}\in\mathbb{N}^{N}$, we have
	\begin{align*}
		\lim_{n\to\infty}\frac{X_{n}(\boldsymbol{d})}{|V_{n}|}&=x(\boldsymbol{d})\textrm{ a.s.}
	\end{align*}
	Furthermore, for every $\boldsymbol{d}\in\mathbb{N}^{N}$, the random variables $x(\boldsymbol{d})$ satisfy the following recurrence equation:
	\begin{align*}
		x(\boldsymbol{d})&=\frac{1}{u(\boldsymbol{d})+1}\left[\sum_{l=1}^{N}r^{(l)}(\boldsymbol{d}-\boldsymbol{e}_{l})x(\boldsymbol{d}-\boldsymbol{e}_{l})+q(\boldsymbol{d})\right].
	\end{align*}
\end{theoremx}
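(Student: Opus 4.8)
The plan is to follow the classical gain--loss (master-equation) approach for preferential-attachment degree counts, proving the almost sure convergence of $X_n(\boldsymbol{d})/|V_n|$ by induction on the weight $s(\boldsymbol{d})$ and reducing each inductive step to a scalar stochastic-approximation recursion. Since exactly one vertex is born per step, $|V_n| = |V_0| + n$, and it is enough to show $X_n(\boldsymbol{d})/n \to x(\boldsymbol{d})$ almost surely. First I would write the exact one-step balance for the conditional expectation: a vertex of degree $\boldsymbol{d}-\boldsymbol{i}$ with $s(\boldsymbol{i}) \geq 1$ that receives exactly the increment $\boldsymbol{i}$ enters the class of degree $\boldsymbol{d}$, a vertex already of degree $\boldsymbol{d}$ stays there precisely when it receives no edge, and the newborn vertex $v_n$ lands in the class with probability $q^{(n)}(\boldsymbol{d})$. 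Using \textbf{(GM1)} this reads
\begin{align*}
	\mathbb{E}\left(X_n(\boldsymbol{d}) \mid \mathcal{F}_{n-1}\right) = X_{n-1}(\boldsymbol{d})\, p_{\boldsymbol{d}}^{(n)}(\boldsymbol{0}) + \sum_{\boldsymbol{i}:\, s(\boldsymbol{i}) \geq 1} X_{n-1}(\boldsymbol{d}-\boldsymbol{i})\, p_{\boldsymbol{d}-\boldsymbol{i}}^{(n)}(\boldsymbol{i}) + q^{(n)}(\boldsymbol{d}).
\end{align*}

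Next I would substitute the asymptotics from \textbf{(GM3)} and \textbf{(GM4)}. By \textbf{(GM3)} the retention factor is $p_{\boldsymbol{d}}^{(n)}(\boldsymbol{0}) = 1 - u_n(\boldsymbol{d})/n$, and by \textbf{(GM4)} only the unit increments $\boldsymbol{i} = \boldsymbol{e}_l$ contribute at order $1/n$, with $n\, p_{\boldsymbol{d}-\boldsymbol{e}_l}^{(n)}(\boldsymbol{e}_l) \to r^{(l)}(\boldsymbol{d}-\boldsymbol{e}_l)$, while every multi-edge term is $o(1/n)$ and therefore negligible after normalization. This recasts the dynamics as the canonical stochastic-approximation recursion
\begin{align*}
	X_n(\boldsymbol{d}) = \left(1 - \frac{u_n(\boldsymbol{d})}{n}\right) X_{n-1}(\boldsymbol{d}) + c_n(\boldsymbol{d}) + \Delta M_n + \varepsilon_n,
\end{align*}
where the predictable source is $c_n(\boldsymbol{d}) = \frac{1}{n}\sum_{l=1}^{N} r^{(l)}(\boldsymbol{d}-\boldsymbol{e}_l)\, X_{n-1}(\boldsymbol{d}-\boldsymbol{e}_l) + q^{(n)}(\boldsymbol{d})$, the term $\Delta M_n = X_n(\boldsymbol{d}) - \mathbb{E}\left(X_n(\boldsymbol{d}) \mid \mathcal{F}_{n-1}\right)$ is a martingale difference, and $\varepsilon_n$ gathers the vanishing multi-edge corrections.

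The induction is on $s(\boldsymbol{d})$. For the minimal admissible weight the gain-from-below term is absent ($x(\boldsymbol{d}-\boldsymbol{e}_l)=0$), so $c_n(\boldsymbol{d}) \to q(\boldsymbol{d})$ by \textbf{(GM5)}; for larger weights the inductive hypothesis $X_{n-1}(\boldsymbol{d}-\boldsymbol{e}_l)/n \to x(\boldsymbol{d}-\boldsymbol{e}_l)$ together with \textbf{(GM4)}, \textbf{(GM5)} gives $c_n(\boldsymbol{d}) \to c(\boldsymbol{d}) := \sum_{l=1}^{N} r^{(l)}(\boldsymbol{d}-\boldsymbol{e}_l)\, x(\boldsymbol{d}-\boldsymbol{e}_l) + q(\boldsymbol{d})$, while \textbf{(GM3)} gives $u_n(\boldsymbol{d}) \to u(\boldsymbol{d})$. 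The core is then a recursion lemma: if $Z_n = (1 - b_n/n) Z_{n-1} + c_n + \Delta M_n + \varepsilon_n$ with $b_n \to b$, $c_n \to c$, $\varepsilon_n \to 0$ and $\mathbb{E}\left((\Delta M_n)^2 \mid \mathcal{F}_{n-1}\right) \leq C n^{1-\delta}$, then $Z_n/n \to c/(1+b)$ almost surely. I would prove it with the integrating factor $\beta_n = \prod_{j} \left(1 - b_j/j\right) \sim K n^{-b}$: dividing the recursion by $\beta_n$ telescopes it, the deterministic part yields $\frac{\beta_n}{n} \sum_{k \leq n} c_k/\beta_k \to c/(1+b)$ by an Abel/Ces\`aro estimate, and the scaled noise vanishes. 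Evaluating $x(\boldsymbol{d}) = c(\boldsymbol{d})/(1+u(\boldsymbol{d}))$ reproduces exactly the stated recurrence.

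The main obstacle is the almost sure control of the accumulated martingale noise, and this is precisely where \textbf{(GM2)} is used. For $S_n := \sum_{k \leq n} \Delta M_k/\beta_k$ the predictable quadratic variation obeys $\langle S \rangle_n \leq \sum_{k \leq n} C k^{1-\delta}/\beta_k^2 = O\left(n^{2-\delta+2b}\right)$, so the strong law for square-integrable martingales gives $S_n = O\left(n^{1-\delta/2+b+o(1)}\right)$ almost surely; multiplying by $\beta_n/n = O\left(n^{-b-1}\right)$ yields $\frac{\beta_n}{n} S_n = O\left(n^{-\delta/2 + o(1)}\right) \to 0$. It is the strict positivity of $\delta$ in \textbf{(GM2)} that turns this into genuine decay rather than mere boundedness, so the fluctuations are asymptotically negligible and the deterministic skeleton limit $x(\boldsymbol{d})$ survives. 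The remaining care is to verify that $\varepsilon_n$ and the subleading gain terms are uniformly $o(1)$ after normalization along the induction, which follows from \textbf{(GM4)} and the boundedness of the normalized counts; these do not affect the limit.
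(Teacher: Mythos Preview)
The paper does not prove this statement; Theorem~\ref{thm_Backhausz_Rozner} is simply quoted as Theorem~1 of the companion paper \cite{Backhausz_Rozner} and invoked as a black box in the proof of Theorem~\ref{thm_asymptotic_degree_dist}. There is therefore no proof in the present article to compare against.

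That said, your outline is the standard route for results of this type and is almost certainly close to what appears in \cite{Backhausz_Rozner}: write the one-step conditional balance from \textbf{(GM1)}, isolate the retention factor $1-u_n(\boldsymbol{d})/n$ and the unit-increment gains, reduce to a scalar recursion $Z_n=(1-b_n/n)Z_{n-1}+c_n+\Delta M_n+\varepsilon_n$, and close it via the integrating factor $\beta_n\sim Kn^{-b}$ together with the strong law for $L^2$-martingales powered by \textbf{(GM2)}. The identification $x(\boldsymbol{d})=c(\boldsymbol{d})/(1+u(\boldsymbol{d}))$ and the induction on $s(\boldsymbol{d})$ are correct. Two small points worth making explicit in a full write-up: (i) the sum over $\boldsymbol{i}$ with $s(\boldsymbol{i})\ge 2$ is finite because necessarily $\boldsymbol{0}\le\boldsymbol{i}\le\boldsymbol{d}$ coordinatewise, so $\varepsilon_n=o(1)$ follows from \textbf{(GM4)} and the trivial bound $X_{n-1}(\cdot)\le|V_{n-1}|$; (ii) the product defining $\beta_n$ should be started at some $n_0$ large enough that $u_n(\boldsymbol{d})<n$, which is guaranteed by \textbf{(GM3)}. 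Neither affects the argument.
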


For the analysis of the perturbed Barab\'asi--Albert random graph, we will use the above theorem, thus we have to show that it satisfies all the assumptions of the general model.

In the proof of the main theorem, we will use the following bound, which can be easily proved by Bonferroni's inequality.
\begin{lemma}
	\label{lemma_binomial_estimation}
	For every $n\geq{}1$ and $x\in[0,1]$, we have
	\begin{align*}
	|(1-x)^{n}-(1-nx)|&\leq{n \choose 2}x^{2}.
	\end{align*}
\end{lemma}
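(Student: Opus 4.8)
The plan is to read both sides of the inequality probabilistically and then apply the first two Bonferroni inequalities. First I would fix $n\geq 1$ and $x\in[0,1]$ and work on a probability space carrying $n$ independent events $A_{1},\dots,A_{n}$ with $\mathbb{P}(A_{i})=x$ for each $i$ (for instance, let $A_{i}=\{Y_{i}=1\}$ where $Y_{1},\dots,Y_{n}$ are independent $\mathrm{Bernoulli}(x)$ random variables). Since the complements $A_{i}^{c}$ are independent and each has probability $1-x$, we get
\[
\mathbb{P}\!\left(\bigcup_{i=1}^{n}A_{i}\right)=1-\mathbb{P}\!\left(\bigcap_{i=1}^{n}A_{i}^{c}\right)=1-(1-x)^{n}.
\]

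The core of the argument is the inclusion–exclusion bounds of order one and two: from Bonferroni's inequality,
\[
\sum_{i=1}^{n}\mathbb{P}(A_{i})-\sum_{1\leq i<j\leq n}\mathbb{P}(A_{i}\cap A_{j})\;\leq\;\mathbb{P}\!\left(\bigcup_{i=1}^{n}A_{i}\right)\;\leq\;\sum_{i=1}^{n}\mathbb{P}(A_{i}).
\]
By independence $\mathbb{P}(A_{i}\cap A_{j})=x^{2}$, so the left side equals $nx-\binom{n}{2}x^{2}$ and the right side equals $nx$. Substituting $\mathbb{P}(\bigcup A_{i})=1-(1-x)^{n}$ and rearranging each inequality yields
\[
0\;\leq\;(1-x)^{n}-(1-nx)\;\leq\;\binom{n}{2}x^{2},
\]
which in particular gives the asserted bound on $|(1-x)^{n}-(1-nx)|$.

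I do not anticipate a genuine obstacle. The only mild points of care are that the probabilistic construction is valid for every $x\in[0,1]$, including the endpoints $x=0,1$, and that the upper Bonferroni bound already forces the quantity inside the absolute value to be non-negative, so no case split on its sign is required. One could equally well avoid probability altogether, expanding $(1-x)^{n}-(1-nx)=\sum_{k=2}^{n}\binom{n}{k}(-x)^{k}$ via the binomial theorem and estimating the alternating tail, but the Bonferroni route is shorter and matches the hint.
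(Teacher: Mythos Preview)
Your proof is correct and follows exactly the route the paper indicates: it states the lemma ``can be easily proved by Bonferroni's inequality'' without giving further details, and your argument is the natural implementation of that hint. Nothing more is needed.
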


\textbf{Proof of Theorem \ref{thm_asymptotic_degree_dist}}.\ We need to check assumptions $\mathbf{(GM1)}$-$\mathbf{(GM5)}$ of the general model. We will use the following set of indices: for any $\boldsymbol{d}\in\mathbb{N}^{N}$ (where $s(\boldsymbol{d})\geq{}1$), we define
\begin{align*}
	\alpha(\boldsymbol{d})=\left\{\boldsymbol{i}=(i_{1},\dots,i_{N})^{T}\in\mathbb{N}^{N}:i_{l}\leq{}d_{l}\quad\forall{}l\in[N]\quad\textrm{and}\quad{}1\leq{}s(\boldsymbol{i})\leq{}M\right\}.
\end{align*}

For every $\boldsymbol{i}\in\mathbb{N}^{N}$ we define
\begin{align*}
	\beta(\boldsymbol{i})=\left\{\boldsymbol{I}=(i_{k,l})_{k,l=1}^{N}\in\mathbb{N}^{N\times{}N}:s\left(\boldsymbol{I}_{\bullet,l}\right)=i_{l}\quad\forall{}l\in[N]\right\}.
\end{align*}

By the dynamics of the perturbed Barab\'asi--Albert random graph, assumption $\mathbf{(GM1)}$ trivially holds.

To see that assumption $\mathbf{(GM2)}$ is satisfied, notice that for every $n$, we have
\begin{align*}
	\mathbb{E}\left[(X_{n}(\boldsymbol{d})-X_{n-1}(\boldsymbol{d}))^{2}\big|\mathcal{F}_{n-1}\right]&\leq{}M^{2}<\infty,
\end{align*}
because there are $M$ new edges.

For assumption $\mathbf{(GM3)}$, we need to show that $u_{n}(\boldsymbol{d})\to{}u(\boldsymbol{d})>0$ almost surely as $n\to\infty$, where
\begin{align*}
	1-\frac{u_{n}(\boldsymbol{d})}{n}&=p_{\boldsymbol{d}}^{(n)}(\boldsymbol{0})=\left(1-\frac{s(\boldsymbol{d})}{2|E_{n-1}|}\right)^{M}.
\end{align*}
To find the almost sure limit of $u_{n}(\boldsymbol{d})$ as $n\to\infty$, we can use the following formula:
\begin{align*}
	\left(1-\frac{s(\boldsymbol{d})}{2|E_{n-1}|}\right)^{M}&=1-\frac{Ms(\boldsymbol{d})}{2|E_{n-1}|}+\eta_{n}(\boldsymbol{d}),
\end{align*}
where
\begin{align*}
	\eta_{n}(\boldsymbol{d})&=\left(1-\frac{s(\boldsymbol{d})}{2|E_{n-1}|}\right)^{M}-\left[1-\frac{Ms(\boldsymbol{d})}{2|E_{n-1}|}\right].
\end{align*}
By using Lemma \ref{lemma_binomial_estimation} and the fact that $|E_{n}|\sim{}Mn$, we obtain that
\begin{align*}
	|\eta_{n}(\boldsymbol{d})|&\leq{M \choose 2}\left(\frac{s(\boldsymbol{d})}{2|E_{n-1}|}\right)^{2}\leq{}M^{2}\left(\frac{s(\boldsymbol{d})}{2|E_{n-1}|}\right)^{2}=o\left(\frac{1}{n}\right),
\end{align*}
which yields
\begin{align*}
	u_{n}(\boldsymbol{d})&=n[1-p_{\boldsymbol{d}}^{(n)}(\boldsymbol{0})]=n\left[1-\left(1-\frac{s(\boldsymbol{d})}{2|E_{n-1}|}\right)^{M}\right]\\
	&=n\left[1-\left(1-\frac{Ms(\boldsymbol{d})}{2|E_{n-1}|}+\eta_{n}(\boldsymbol{d})\right)\right]=n\left[\frac{Ms(\boldsymbol{d})}{2|E_{n-1}|}-\eta_{n}(\boldsymbol{d})\right]\\
	&=n\frac{Ms(\boldsymbol{d})}{2|E_{n-1}|}-n\eta_{n}(\boldsymbol{d})\to{}u(\boldsymbol{d})=\frac{s(\boldsymbol{d})}{2}>0
\end{align*}
almost surely as $n\to\infty$.

For assumption $\mathbf{(GM4)}$, we will show that for every $\boldsymbol{i}\in\mathbb{N}^{N}$, such that $s(\boldsymbol{i})\geq{}1$
\begin{align*}
	\lim_{n\to\infty}np_{\boldsymbol{d}-\boldsymbol{i}}^{(n)}(\boldsymbol{i})=\left\{
	\begin{array}{l l}
		\frac{1}{2}(\boldsymbol{d}-\boldsymbol{e}_{l})^{T}\boldsymbol{F}_{\bullet,l} & \textrm{if $\boldsymbol{i}=\boldsymbol{e}_{l}$,}\\
		0 & \textrm{otherwise}
	\end{array}
	\right.
\end{align*}
holds almost surely. Notice that if $\boldsymbol{d}_{k}-\boldsymbol{i}_{k}<0$ for any $k\in[N]$ or $\boldsymbol{d}-\boldsymbol{i}=\boldsymbol{0}$, then $p_{\boldsymbol{d}-\boldsymbol{i}}^{(n)}(\boldsymbol{i})=0$ for every $n$.

Let us fix $\boldsymbol{I}=(i_{k,l})_{k,l=1}^{N}\in\beta(\boldsymbol{i})$ where $\boldsymbol{i}\in\alpha(\boldsymbol{d})$. In the perturbed Barab\'asi--Albert random graph for a fixed vertex and for every $k,l\in[N]$ we denote by $i_{k,l}$ the number of edges connected to the given vertex which were originally of type $k$ and changed their types to $l$. In this case the value of $p_{\boldsymbol{d}-\boldsymbol{i}}^{(n)}(\boldsymbol{i})$ is given by
\begin{align*}
	p_{\boldsymbol{d}-\boldsymbol{i}}^{(n)}(\boldsymbol{i})&=\sum_{\boldsymbol{I}\in\beta(\boldsymbol{i})}\hat{p}_{\boldsymbol{d}-\boldsymbol{i}}^{(n)}(\boldsymbol{I}),
\end{align*}
where
\begin{align*}
	\hat{p}_{\boldsymbol{d}-\boldsymbol{i}}^{(n)}(\boldsymbol{I})&=\frac{M!}{\left(\prod_{k,l=1}^{N}i_{k,l}!\right)(M-s(\boldsymbol{i}))!}\left[\prod_{k,l=1}^{N}\left[\left(\frac{\boldsymbol{d}_{k}-\boldsymbol{i}_{k}}{2|E_{n-1}|}\right)\varepsilon_{k,l}^{(n)}\right]^{i_{k,l}}\right]\left[1-\frac{s(\boldsymbol{d}-\boldsymbol{i})}{2|E_{n-1}|}\right]^{M-s(\boldsymbol{i})}.
\end{align*}

First, let us fix $l\in[N]$. Similarly to the previous calculations, we can use the following formula 
\begin{align*}
	p_{\boldsymbol{d}-\boldsymbol{e}_{l}}^{(n)}(\boldsymbol{e}_{l})&=M\left[\sum_{k=1}^{N}\frac{\boldsymbol{d}_{k}-(\boldsymbol{e}_{l})_{k}}{2|E_{n-1}|}\varepsilon_{k,l}^{(n)}\right]\left(1-\frac{s(\boldsymbol{d})-1}{2|E_{n-1}|}\right)^{M-1}\\
	&=M\left[\sum_{k=1}^{N}\frac{\boldsymbol{d}_{k}-(\boldsymbol{e}_{l})_{k}}{2|E_{n-1}|}\varepsilon_{k,l}^{(n)}\right]\left(1-\frac{(M-1)(s(\boldsymbol{d})-1)}{2|E_{n-1}|}+\eta_{n}'(\boldsymbol{d})\right)
\end{align*}
where
\begin{align*}
	\eta_{n}'(\boldsymbol{d})&=\left(1-\frac{s(\boldsymbol{d})-1}{2|E_{n-1}|}\right)^{M-1}-\left[1-\frac{(M-1)(s(\boldsymbol{d})-1)}{2|E_{n-1}|}\right]
\end{align*}
and $(\boldsymbol{e}_{l})_{k}$ denotes the $k^{\textrm{th}}$ element of $\boldsymbol{e}_{l}$, i.e.\ $(\boldsymbol{e}_{l})_{k}=\left\{\begin{array}{l l}
	1 & \textrm{if $k=l$} \\
	0 & \textrm{if $k\neq{}l$}
\end{array}\right.$.

Again, by using Lemma \ref{lemma_binomial_estimation} and the fact that $|E_{n}|\sim{}Mn$, we get that
\begin{align*}
	|\eta_{n}'(\boldsymbol{d})|&\leq{M-1 \choose 2}\left(\frac{s(\boldsymbol{d})-1}{2|E_{n-1}|}\right)^{2}\leq{}(M-1)^{2}\left(\frac{s(\boldsymbol{d})-1}{2|E_{n-1}|}\right)^{2}=o\left(\frac{1}{n}\right).
\end{align*}
Recall that $\varepsilon_{k,l}^{(n)}\to\varepsilon_{k,l}\in(0,1)$ as $n\to\infty$ for every $k,l\in[N]$. We conclude that
\begin{align*}
	np_{\boldsymbol{d}-\boldsymbol{e}_{l}}^{(n)}(\boldsymbol{e}_{l})&=nM\left[\sum_{k=1}^{N}\frac{\boldsymbol{d}_{k}-(\boldsymbol{e}_{l})_{k}}{2|E_{n-1}|}\varepsilon_{k,l}^{(n)}\right]\left(1-\frac{(M-1)(s(\boldsymbol{d})-1)}{2|E_{n-1}|}+\eta_{n}'(\boldsymbol{d})\right)\\
	&\to\sum_{k=1}^{N}\frac{\boldsymbol{d}_{k}-(\boldsymbol{e}_{l})_{k}}{2}\varepsilon_{k,l}=\frac{(\boldsymbol{d}-\boldsymbol{e}_{l})^{T}\boldsymbol{F}_{\bullet,l}}{2}
\end{align*}
as $n\to\infty$.

Fix $\boldsymbol{i}\in\alpha(\boldsymbol{d})\setminus\left\{\boldsymbol{e}_{l},l\in[N]\right\}$. We need to prove that in this case $\lim_{n\to\infty}np_{\boldsymbol{d}-\boldsymbol{i}}^{(n)}(\boldsymbol{i})=0$. Recall that
\begin{align*}
	p_{\boldsymbol{d}-\boldsymbol{i}}^{(n)}(\boldsymbol{i})&=\sum_{I\in\beta(\boldsymbol{i})}\hat{p}_{\boldsymbol{d}}^{(n)}(\boldsymbol{I}).
\end{align*}
Because of the choice of $\boldsymbol{i}$, we have $s(\boldsymbol{i})=s(\boldsymbol{I})\geq{}2$. By using this and the fact that $|E_{n}|\sim{}Mn$, we conclude that
\begin{align*}
	\hat{p}_{\boldsymbol{d}-\boldsymbol{i}}^{(n)}(\boldsymbol{I})&=\frac{M!}{\left(\prod_{k,l=1}^{N}i_{k,l}!\right)(M-s(\boldsymbol{i}))!}\left[\prod_{k,l=1}^{N}\left[\left(\frac{\boldsymbol{d}_{k}-\boldsymbol{i}_{k}}{2|E_{n-1}|}\right)\varepsilon_{k,l}^{(n)}\right]^{i_{k,l}}\right]\left[1-\frac{s(\boldsymbol{d}-\boldsymbol{i})}{2|E_{n-1}|}\right]^{M-s(\boldsymbol{i})}\\
	&\leq\frac{M!}{\left(\prod_{k,l=1}^{N}i_{k,l}!\right)(M-s(\boldsymbol{i}))!}\left[\prod_{k,l=1}^{N}\left[\left(\frac{\boldsymbol{d}_{k}-\boldsymbol{i}_{k}}{2|E_{n-1}|}\right)\varepsilon_{k,l}^{(n)}\right]^{i_{k,l}}\right]=o\left(\frac{1}{n}\right).
\end{align*}
This shows that (\textbf{GM4}) holds with $r^{(l)}(\boldsymbol{d}-\boldsymbol{e}_{l})=\frac{1}{2}(\boldsymbol{d}-\boldsymbol{e}_{l})^{T}\boldsymbol{F}_{\bullet,l}$.

Finally, for assumption $\mathbf{(GM5)}$, we have to find the almost sure limit of $q^{(n)}(\boldsymbol{d})$ as $n\to\infty$. Recall that, in the $n^{\textrm{th}}$ step, every new edge will be of type $l$ with probability $\psi_{n}^{(l)}=\frac{|E_{n}^{(l)}|}{|E_{n}|}$.

In the perturbed Barab\'asi--Albert random graph we have
\begin{align*}
	q(\boldsymbol{d})&=\lim_{n\to\infty}q^{(n)}(\boldsymbol{d})=\lim_{n\to\infty}\left[\textrm{Ind}(M=s(\boldsymbol{d}))\sum_{\boldsymbol{D}\in\beta(\boldsymbol{d})}\hat{q}^{(n)}(\boldsymbol{D})\right],
\end{align*}
where
\begin{align*}
	\hat{q}^{(n)}(\boldsymbol{D})&=\frac{s(\boldsymbol{D})!}{\prod_{k,l=1}^{N}d_{k,l}!}\left[\prod_{k,l=1}^{N}\left(\psi_{n}^{(k)}\cdot\varepsilon_{k,l}^{(n)}\right)^{d_{k,l}}\right].
\end{align*}

Notice that $s(\boldsymbol{D})=s(\boldsymbol{d})$. By using Lemma \ref{lemma_asymptotic_edge_composition}, the multinomial theorem and the fact that $\varepsilon_{k,l}^{(n)}\to\varepsilon_{k,l}$ almost surely as $n\to\infty$, we get that
\begin{align*}
	q(\boldsymbol{d})&=\textrm{Ind}(M=s(\boldsymbol{d}))\sum_{\boldsymbol{D}\in\beta(\boldsymbol{d})}\hat{q}(\boldsymbol{D})\\
	&=\textrm{Ind}(M=s(\boldsymbol{d}))\sum_{\boldsymbol{D}\in\beta(\boldsymbol{d})}\frac{s(\boldsymbol{D})!}{\prod_{k,l=1}^{N}d_{k,l}!}\left[\prod_{k,l=1}^{N}\left(\psi^{(k)}\cdot\varepsilon_{k,l}\right)^{d_{k,l}}\right]\\
	&=\textrm{Ind}(M=s(\boldsymbol{d}))M!\left[\prod_{l=1}^{N}\frac{1}{d_{l}!}\left(\sum_{k=1}^{N}\psi^{(k)}\cdot\varepsilon_{k,l}\right)^{d_{l}}\right].
\end{align*}

We conclude that
\begin{align*}
	u_{n}(\boldsymbol{d})&\to{}u(\boldsymbol{d})=\frac{s(\boldsymbol{d})}{2}\\
	q^{(n)}(\boldsymbol{d})&\to{}q(\boldsymbol{d})=\textrm{Ind}(M=s(\boldsymbol{d}))M!\left[\prod_{l=1}^{N}\frac{1}{d_{l}!}\left(\sum_{k=1}^{N}\psi^{(k)}\cdot\varepsilon_{k,l}\right)^{d_{l}}\right]
\end{align*}
as $n\to\infty$. For the quantity defined in equation \eqref{eq_GM4}, we have
\begin{align*}
	\lim_{n\to\infty}np_{\boldsymbol{d}-\boldsymbol{i}}^{(n)}(\boldsymbol{i})=\left\{
	\begin{array}{l l}
		\frac{(\boldsymbol{d}-\boldsymbol{e}_{l})^{T}\boldsymbol{F}_{\bullet,l}}{2} & \textrm{if $\boldsymbol{i}=\boldsymbol{e}_{l}$,}\\
		0 & \textrm{otherwise}
	\end{array}
	\right.
\end{align*}
almost surely, that is, $r^{(l)}(\boldsymbol{d}-\boldsymbol{e}_{l})=\frac{1}{2}(\boldsymbol{d}-\boldsymbol{e}_{l})^{T}\boldsymbol{F}_{\bullet,l}$.

Applying Theorem \ref{thm_Backhausz_Rozner}, we get Theorem \ref{thm_asymptotic_degree_dist}.\hfill$\square$

\section*{Acknowledgements}
This research was partially supported by Pallas Athene Domus Educationis Foundation. The views expressed are those of the authors’ and do not necessarily reflect the official opinion of Pallas Athene Domus Educationis Foundation. The first author was supported by the Bolyai Research Grant of the Hungarian Academy of Sciences.

The authors thank the referee for the review and appreciate the comments and suggestions which contributed to the improvements of the article.

\end{document}